\newtheorem{theorem}{Theorem}[section]
\newtheorem{lemma}[theorem]{Lemma}
\newtheorem{corollary}[theorem]{Corollary}
\newtheorem{proposition}[theorem]{Proposition}
\newtheorem{definition}[theorem]{Definition}
\theoremstyle{remark}
\theoremstyle{definition}
\newtheorem{Q}{Question}[section]
\def\<{\langle}
\def\>{\rangle}
\def\dist{{\rm dist}}
\newcommand{\romd}{\mathrm{d}}
\newcommand{\Ical}{\mathcal{I}}
\newcommand{\aaa}{\alpha}
\newcommand{\bb}{\beta}
\newcommand{\cc}{\gamma}
\newcommand{\dd}{\delta}
\begin{document}

\title{On the Assouad dimension of differences of self-similar fractals}

\author{Alexandros Margaris, Eric J. Olson and James C. Robinson}
\date{}

\maketitle

\begin{abstract}
If $X$ is a set with finite Assouad dimension, it is known that the Assouad dimension of $X-X$ does not necessarily obey any non-trivial bound in terms of the Assouad dimension of $X$. In this paper, we consider self-similar sets on the real line and we show that if a particular weak separation condition is satisfied, then the Assouad dimension of the set of differences is bounded above by twice the Assouad dimension of the set itself. We then apply this result to a particular class of asymmetric Cantor sets.
\end{abstract}

\section{Introduction}

We say that a subset $V$ of a metric space $(X,d)$ is $(M,s)$-homogeneous if for every $x \in V$ and $r > \rho > 0$
$$N_{V}(r,\rho) := \sup_{x \in V} N(V \cap B(x,r), \rho) \leq M\left(\frac{R}{r}\right)^{s},$$
where $N(A, \rho)$ denotes the minimum number of balls of radius $\rho$ centred at $A$ required to cover $A$. 

The Assouad dimension of $V \subset (X,d)$, $\romd_{A}(V)$ is defined as the infimum of all $s >0$ such that $V$ is $(M,s)$-homogeneous for some $M >0$.

It is well known (see for example Chapter 9 in the book of Robinson \cite{JCR}) that all subsets of the Euclidean space $\mathbb{R}^{d}$ have finite Assouad dimension, which is bounded above by $d$. 

We also want to recall the definition of the (upper) box--counting dimension which is a lower bound for the Assouad dimension.

\begin{definition}\label{BC2}
Suppose that $(E,d)$ is a metric space. Let $X$ a compact subset of $E$ and let $N(X, \epsilon)$ denote the minimum number of balls of radius $\epsilon$ with centres in $X$ required to cover $X$. The upper box-counting dimension of $X$ is 

\begin{equation}\label{BC3}
\romd_{B}(X) = \limsup_{\epsilon \rightarrow 0} \frac{\log N(X, \epsilon)}{-\log \epsilon}.
\end{equation}

\end{definition}

For the rest of the paper, we will refer to $\romd_{B}(X)$ simply as the box--counting dimension of $X$.

Note that it follows from the definition that if $d > \romd_{B}(X)$, then there exists some positive constant $C=C_{d}$, such that 

\begin{equation}
N(X, \epsilon) \leq C \epsilon^{-d}.
\end{equation}

Olson \& Robinson \cite{OR} showed that if $X$ is a subset of a Hilbert space such that $X-X$ is $(M,s)$-homogeneous, then $X$ admits almost bi--Lipschitz embeddings into Euclidean spaces. Robinson \cite{Rob} extended the result for subsets of Banach spaces. Unfortunately, the fact that $X$ is homogeneous does not necessarily imply that $X-X$ is also homogeneous (examples of such sets can be found in Chapter 9 in the book of Robinson \cite{JCR} or in the PhD thesis of Margaris \cite{Margaris}).

In this paper, we study attractors of Iterated Function Systems. We know from the above discussion that attractors of infinite-dimensional dynamical systems such that the set of differences is $(M,s)$-homogeneous admit embeddings into Euclidean spaces, without losing information from the dynamical system. When the attractor is $(M,s)$-homogeneous, it is a natural question to consider when this property is inherited by the set of differences. 

We focus on self-similar sets, i.e.\ attractors of systems of contracting similarities in the context of a Euclidean space $\mathbb{R}^{s}$. We first want to set up our theory in the general context of any complete metric space $(X,d)$ and then concentrate on systems in $\mathbb{R}^{s}$.
Suppose $(X,d)$ is a complete metric space and let $\mathcal{I} = \{1,...,|\mathcal{I}|\}$ be a finite set of indices. We say that $\mathcal{F} = \{f_{i} \colon X \to X\}_{i \in \mathcal{I}}$ is a system of contracting similarities, if for all $i \in \mathcal{I},$ we have 
$$d(f_{i}(x), f_{i}(y)) = c_{i} \, d(x,y),$$ for some $0< c_{i} < 1.$ 
Then, these maps are obviously contractions, so by the Banach fixed-point theorem they all have fixed points in $X$. We then say that a non-empty compact set $K \subset X$ is an attractor of the system if 
$$K = \bigcup_{i=1}^{|I|} f_{i}(K).$$
It has been proved by Hutchinson \cite{HUTCH} that every system $\mathcal{F}$ in a complete metric space $X$ defines a unique attractor $K$.

We now introduce some notation. Let $c_{min} = \min \{c_{i} : i\in I\}$ and $c_{max} = \max \{c_{i} : i\in I\}$.
Let $\Ical^{*} = \cup_{k \geq 1} \Ical^{k}$ be the set of all finite sequences with entries in $\Ical$. For
$$\alpha = (i_{1},...,i_{k}) \in \Ical^{*},$$ we write
$$f_{\alpha} = f_{i_{1} i_{2} \cdots i_{k}}= f_{i_{1}} \circ f_{i_{2}}\circ...\circ f_{i_{k}} ,$$
and 
$$c_{\aaa} = c_{i_{1}}...c_{i_{k}}.$$
Let also $$\bar{\aaa} = (i_{1},...,i_{k-1})$$ 
We also define for any $b<1$, 
$$I_{b} = \{\aaa \in \Ical^{*} : c_{\aaa} \leq b < c_{\bar{\aaa}}\},$$
the indices of maps at scale $b$.
Finally, we define $C(I)$ to be the set all infinite sequences of integers $(i_{p})_{p=1}^{\infty}$, with entries in $I$. We now state without proof some general properties of the attractor $K$, that we will need in what follows. For the proofs, see the paper of Hutchinson \cite{HUTCH}.

\begin{proposition}\label{properties of K} Suppose that $(\mathcal{F},K)$ is a system of contracting similarities with attractor $K$. Then we have the following
\begin{enumerate}
\item For any given $b<1$, $ K = \bigcup_{\aaa \in I_{b}} f_{\aaa}(K).$
\item $K \supseteq f_{i_{1}}(K) \supseteq f_{i_{1}i_{2}}(K) \supseteq \cdots \supseteq  f_{i_{1}...i_{p}}(K) \supseteq \cdots$ and $\bigcap_{p=1}^{\infty} f_{i_{1}...i_{p}}(K)$ is a singleton, which is denoted by $k_{v}$, for $v = (i_{1},i_{2}, \cdots, i_{p}, \cdots) \in C(I).$ $K$ is the union of all these singletons.
\end{enumerate}
\end{proposition}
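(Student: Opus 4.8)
The plan is to deduce both claims from the single identity $K=\bigcup_{i\in\mathcal I}f_i(K)$ together with the completeness of $(X,d)$ and the elementary fact that a composition of similarities is again a similarity: for $\alpha=(i_1,\dots,i_k)\in\mathcal I^{*}$ one has $d(f_\alpha(x),f_\alpha(y))=c_\alpha\,d(x,y)$, so $f_\alpha(K)$ is compact with $\mathrm{diam}\,f_\alpha(K)=c_\alpha\,\mathrm{diam}\,K\le c_{\max}^{|\alpha|}\,\mathrm{diam}\,K$. The first step is the \emph{iterated} self-similarity relation $K=\bigcup_{\alpha\in\mathcal I^{k}}f_\alpha(K)$ for every $k\ge1$, proved by induction on $k$: applying $f_i$ to $K=\bigcup_{j}f_j(K)$ gives $f_i(K)=\bigcup_{j}f_{ij}(K)$, and taking the union over $i\in\mathcal I$ advances the induction. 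In particular $f_\alpha(K)\subseteq K$ and, more generally, $f_{\alpha\beta}(K)\subseteq f_\alpha(K)$ for all finite words $\alpha,\beta$, which is exactly the descending chain in part (2).

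For the rest of part (2), fix $v=(i_1,i_2,\dots)\in C(I)$. The sets $f_{i_1\cdots i_p}(K)$, $p\ge1$, are non-empty, compact and nested, with diameters at most $c_{\max}^{p}\,\mathrm{diam}\,K\to0$; since $X$ is complete, Cantor's intersection theorem gives that $\bigcap_{p\ge1}f_{i_1\cdots i_p}(K)$ is a single point, which we name $k_v$. The inclusion $\bigcup_{v}\{k_v\}\subseteq K$ is immediate from $k_v\in f_{i_1}(K)\subseteq K$. For the reverse inclusion, given $x\in K$ I would build a code for $x$ greedily: choose $i_1$ with $x\in f_{i_1}(K)$ using $K=\bigcup_i f_i(K)$, then choose $i_2$ with $x\in f_{i_1 i_2}(K)$ using $f_{i_1}(K)=\bigcup_i f_{i_1 i}(K)$, and so on; the resulting $v=(i_1,i_2,\dots)$ satisfies $x\in\bigcap_{p\ge1}f_{i_1\cdots i_p}(K)=\{k_v\}$, hence $x=k_v$.

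For part (1), fix $b\in(0,1)$ and I would first check that $I_b$ is a finite cross-section of the symbolic tree. Writing $\alpha=\bar\alpha\,i$ with last symbol $i$, membership $\alpha\in I_b$ forces $c_\alpha=c_{\bar\alpha}\,c_i>b\,c_{\min}$, while also $c_\alpha\le c_{\max}^{|\alpha|}$; hence $|\alpha|<\log(b\,c_{\min})/\log c_{\max}$, so $I_b$ consists of words of bounded length over a finite alphabet and is finite. Moreover, for any infinite word $v=(i_1,i_2,\dots)$ the numbers $a_p:=c_{i_1\cdots i_p}$ (with $a_0:=1$, the empty product) are strictly decreasing in $p$ since each ratio $c_i<1$, with $a_0=1>b$ and $a_p\le c_{\max}^{p}\to0$; hence there is exactly one $p\ge1$ with $a_p\le b<a_{p-1}$, and $(i_1,\dots,i_p)$ is then the unique prefix of $v$ lying in $I_b$. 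Given this, $\bigcup_{\alpha\in I_b}f_\alpha(K)\subseteq K$ because every $f_\alpha(K)\subseteq K$, and conversely any $x\in K$ equals some $k_v$ by part (2); taking $\alpha$ to be the unique prefix of $v$ in $I_b$ gives $x=k_v\in f_\alpha(K)$. Thus $K=\bigcup_{\alpha\in I_b}f_\alpha(K)$.

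I expect the only genuinely delicate point to be the combinatorial bookkeeping in part (1) — verifying carefully that $I_b$ meets every branch of the tree exactly once, including the degenerate cases (for instance when $b$ is close enough to $1$ that some length-one words already belong to $I_b$), where the convention $c_{\emptyset}=1$ is what makes the inequality $c_{\bar\alpha}>b$ meaningful. The analytic content — a decreasing sequence of non-empty compact sets of vanishing diameter collapsing to a single point — is routine once completeness of $X$ is invoked.
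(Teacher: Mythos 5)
Your proof is correct and complete. The paper itself states this proposition without proof, deferring to Hutchinson's original paper, so there is no in-text argument to compare against; what you have written is the standard coding-map argument (iterated self-similarity $K=\bigcup_{\alpha\in\mathcal I^{k}}f_\alpha(K)$, nested compacta of vanishing diameter collapsing to $k_v$, and the observation that every branch of the tree meets the cross-section $I_b$ exactly once), and all the steps check out, including the boundary convention $c_{\emptyset}=1$ that makes $\alpha\in I_b$ well defined for length-one words. The only cosmetic remark is that completeness of $X$ is not actually needed for the intersection step, since a decreasing chain of non-empty compact sets already has non-empty intersection by the finite intersection property; compactness of $K$ (hence of each $f_\alpha(K)$) suffices.
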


Suppose now that $(\mathcal{F},K)$ is a system of contracting similarities in $\mathbb{R}^{s}$ with an attractor $K$. One can show (see Hutchinson \cite{HUTCH}) that a function $f \colon \mathbb{R}^{s} \to \mathbb{R}^{s}$ is a contracting similarity if and only if there exist $0<c_{f}<1$, $q_{f} \in \mathbb{R}^{s}$ such that
\[
	f(x) = c_{f} O_{f}(x) + q_{f},
\]
where $O_{f} \colon \mathbb{R}^{s} \to \mathbb{R}^{s}$ is an orthogonal transformation. 

The computation of dimensions of $K$ is of particular interest. One of the most common dimensions that we are interested in is the similarity dimension which is defined as follows.

\begin{definition}
Suppose $(X,d)$ is a complete metric space and let $\mathcal{F} = \{f_{i} \colon X \to X\}_{i \in I}$ be a system of finitely many contracting similarities. The similarity dimension $d_{sim}$ is defined as the number $D$ such that 
\[
	\sum_{i \in I} c_{i}^{D} = 1.
\]
\end{definition}

In general, we know (see Falconer \cite{FA1} and McLaughlin \cite{MCLAUGH}) that the box--counting and Hausdorff dimensions of an attractor $K$ are equal and bounded above by the similarity dimension. If the system is defined on a Euclidean space and the images of the attractor under the maps $f_{i}$ do not overlap too much, then the Assouad dimension also equals the box--counting dimension. An example of such a property is the weak separation condition, which was introduced by Zerner \cite{ZE}, in the context of a Euclidean space. 

\begin{definition}\label{normal weak separation}
Suppose that $\mathcal{F} = \{f_{i} \colon \mathbb{R}^{s} \to \mathbb{R}^{s} \}$ is a system of contracting similarities, with $K$ as an attractor. We say that the system satisfies the weak separation property if there exists $\epsilon >0$ such that for any given $0 < b< 1$ and any $\aaa, \bb \in I_{b}$, we have
\[
	f_{\aaa} = f_{\bb} \qquad \text{or} \qquad \|f_{\aaa}^{-1} f_{\bb} - i_{s}\|_{L^{\infty}(K)} \geq \epsilon,
\]
where $i_{s}$ denotes the identity map $i_{s} \colon \mathbb{R}^{s} \to \mathbb{R}^{s}.$
\end{definition}

Fraser, Olson, Robinson \& Henderson \cite{FORX} used the notion of Ahlfors regularity and proved that the Assouad dimension also coincides with the Hausdorff and box--counting dimensions, under the weak separation condition. In Section \ref{zerner weak separation}, we give an independent proof of this result, without using Ahlfors regularity, solely based on the definitions and the separation condition. Moreover, the proof provides us a useful model for the analysis of sets of differences that follows.

Even in this simple case, there are examples due to Henderson \cite{Henderson} of sets with small Assouad dimension but maximal Assouad dimension for the set of differences. Motivated by the work of Olson \& Robinson \cite{OR}, \cite{Rob} we are interested in establishing non-trivial bounds for the Assouad dimension of differences of self-similar sets in terms of the Assouad dimension of the set itself. In Section \ref{wspdd}, we introduce a new separation condition which we call the weak separation condition for differences and is defined as follows.

\begin{definition}
Suppose that $\mathcal{F} = \{f_{i} \colon \mathbb{R}^{s} \to \mathbb{R}^{s}\}$ is  an system of contracting similarities. Suppose that $K$ is the attractor of the system. The system satisfies the weak separation condition for differences if there exist $M, \epsilon >0$ and a collection of points $\{x_{j}\}_{j=0}^{M} \in K$ such that for every given $0<b<1$ and every $\aaa, \bb, \cc, \dd \in I_{b},$  we have 
\[
	f_{\aaa}(K) - f_{\bb}(K) = f_{\cc}(K) - f_{\dd}(K)
\]
 or
\[
	\|f_{\aaa}(x_{i}) - f_{\bb}(x_{j}) - f_{\cc}(x_{i}) + f_{\dd}(x_{j})\| \geq \epsilon b,
\]
for some $i, j \leq M$ that depend on $\aaa, \bb, \cc, \dd \in I_{b}.$ 
\end{definition}

We show that if the system satisfies the above condition, then the Assouad dimension of the set of differences is bounded above by twice the dimension of the set.

Finally, in Section \ref{cantor sets}, we consider Cantor sets, which are the simplest example of self-similar fractals. We show that symmetric Cantor sets and a particular class of asymmetric Cantor sets satisfy the weak separation condition for differences. In particular, we obtain non-trivial bounds for the Assouad dimension of Cantor sets that fall into this class.

\section{Systems satisfying the weak separation condition}\label{zerner weak separation}

In this section, we give an alternative proof of the fact that when a system of contracting similarities satisfies the weak separation property (Definition \ref{normal weak separation}), then the Assouad dimension coincides with the box--counting dimension. We first recall the definition of an affine space and of vectors in general position.

\begin{definition}
Suppose $k,s \in \mathbb{N}$ with $k \leq s+1$ and $x_{1},\dots,x_{k} \in \mathbb{R}^{s}$. The affine space generated by $\{x_{1},\dots,x_{k}\}$, $A(x_{1},\dots,x_{k})$ is the collection of all points of the form
$$\sum_{j=1}^{k} a_{j} x_{j} \, \, \, \, \, \text{and} \, \, \, \, \, \sum_{j=1}^{k} a_{j} = 1.$$
\end{definition}

\begin{definition}
We say that $\{x_{j}\}_{j=0}^{s} \subset \mathbb{R}^{s}$ are in general position if no $x_{i}$ lies in the affine space generated by any subcollection of the $\{x_{j}\}$ consisting of less or equal than $s$ points. In other words, no $m$ of them can lie in an $(m-1)$-- dimensional hyperplane for $m \leq s$.
\end{definition}
It is easy to see that if $\{x_{j}\}_{j=0}^{s} \subset \mathbb{R}^{s}$ are in general position, then $A(x_{1},\dots,x_{k}) = \mathbb{R}^{s}$. Moreover, the vectors $\{x_{j} - x_{0}\}_{j=1}^{s} \subset \mathbb{R}^{s}$ are linearly independent and they span the whole space.

We now state and prove the following general lemma, which will give an equivalent property with the weak separation condition.

\begin{lemma}\label{BG}
Suppose $K \subset \mathbb{R}^{s}$ is compact. Then, for every $\{x_{j}\}_{j=0}^{s} \subset K$ in general position there exists an $C >0$ such that for every affine map $h \colon \mathbb{R}^{s} \to \mathbb{R}^{s}$ of the form
$$h(x) = A  x + b,$$ where $b$ is a constant and $A$ is an $s \times s $ matrix, we have
$$\|h - I\|_{L^{\infty}\left( K \right)} \leq C |h(x_{j_{*}}) - x_{j_{*}}|,$$
for some $j_{*} \in \{0,1, \dots ,s\}$
that depends on $h$.
\end{lemma}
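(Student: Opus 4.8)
The plan is to exploit the fact that $\{x_j\}_{j=0}^s$ in general position form a kind of ``rigid frame'' for $\mathbb{R}^s$: the values $h(x_0),\dots,h(x_s)$ determine the affine map $h$ completely. More precisely, since $\{x_j-x_0\}_{j=1}^s$ are linearly independent, they form a basis of $\mathbb{R}^s$, so every $x\in\mathbb{R}^s$ can be written as $x = x_0 + \sum_{j=1}^s \lambda_j(x)(x_j-x_0)$ with coefficients $\lambda_j$ depending linearly (hence continuously) on $x$; and since $K$ is compact, the coefficients $\lambda_j(x)$ are bounded on $K$, say $|\lambda_j(x)| \le L$ for all $x\in K$ and all $j$. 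Applying the affine map $h$ and subtracting $x$, linearity of $h-I$ (note $h-I$ is affine, and $h(x)-x = (h-I)(x) - (h-I)(0) + \cdots$ — one must be slightly careful since $h-I$ is affine, not linear, so I will instead work with the linear map $g(x) := (h-I)(x) - (h-I)(x_0) = (A-I)(x-x_0)$) gives
\[
h(x) - x = \bigl(h(x_0)-x_0\bigr) + \sum_{j=1}^s \lambda_j(x)\,(A-I)(x_j - x_0)
= \bigl(h(x_0)-x_0\bigr) + \sum_{j=1}^s \lambda_j(x)\,\bigl[(h(x_j)-x_j) - (h(x_0)-x_0)\bigr].
\]
Taking norms and using $|\lambda_j(x)|\le L$,
\[
|h(x) - x| \le \Bigl(1 + 2sL\Bigr)\,\max_{0\le j\le s} |h(x_j) - x_j|,
\]
and taking the supremum over $x\in K$ yields the claim with $C = 1+2sL$ and $j_*$ the index attaining the maximum.

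The only genuine point requiring care is the decomposition of an arbitrary $x\in K$ in terms of the frame and the uniform bound on the coefficients. For this I would note that the matrix $P$ whose columns are $x_1-x_0,\dots,x_s-x_0$ is invertible by general position, so $\lambda(x) = P^{-1}(x-x_0)$ gives the coefficients explicitly, and $L := \sup_{x\in K}\|P^{-1}(x-x_0)\|_\infty < \infty$ since $K$ is bounded. This $L$, and hence $C$, depends only on $K$ and the chosen points $\{x_j\}$, not on $h$ — which is exactly what the statement requires.

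I do not anticipate a serious obstacle here; the lemma is essentially the elementary linear-algebra observation that an affine map is Lipschitz-controlled on a bounded set by its action on an affine basis, together with a bookkeeping of constants. The one bit of bookkeeping worth stating cleanly in the writeup is the reduction from the affine map $h-I$ to the linear map $A-I$ via the base point $x_0$, so that the telescoping identity above is valid; everything else is a direct triangle-inequality estimate.
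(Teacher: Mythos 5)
Your proof is correct and follows essentially the same route as the paper's: both decompose an arbitrary $x\in K$ over the affine frame $\{x_j\}_{j=0}^{s}$, use the telescoping identity to express $h(x)-x$ in terms of the $h(x_j)-x_j$, and bound the coordinates uniformly on $K$ via compactness and the invertibility coming from general position. The only cosmetic difference is that you work with linear coordinates relative to the base point $x_0$ while the paper uses barycentric coefficients summing to $1$, but the paper's own coefficient bound passes through exactly your basis representation, so the arguments coincide.
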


\begin{proof}
Let $j_{*}$ be such that
\[
	|h(x_{j_{*}}) - x_{j_{*}}| = \max\{|h(x_{j}) - x_{j}| : j=0,\ldots,s\}
\]
Let also $x \in K$ be such that
\[
    \|h-I\|_{L^{\infty}\left([0,1]^{s}\right)} = |h(x) - x|.
\]
Since $A(x_{0},\dots,x_{s}) = \mathbb{R}^{s}$, we choose $\{a_{j}\}_{j=0}^{s} \in \mathbb{R}$ such that
\[
	x = \sum_{j=0}^{s}a_{j}x_{j} \, \, \, \, \text{and} \, \, \, \,  \sum_{j=0}^{s}a_{j} = 1.
\]
Consequently,

\begin{align*}
|h(x) - x| & = \left|b + \sum_{j=0}^{s} a_{j} (A x_{j}  - x_{j} )\right| \\
& = \left|b + \sum_{j=0}^{s} a_{j} (h(x_{j}) - x_{j}) - b \sum_{j=0}^{s} a_{j}\right| \\
& \leq \max \{|h(x_{j}) - x_{j}| : j=0,...,s\} \sum_{j=0}^{s}|a_{j}|.
\end{align*} 
It remains to estimate  $\sum_{j=0}^{s}|a_{j}|$.

To do this, we make the following computation:
\begin{align*}
x - x_{0} & = \sum_{j=0}^{s} a_{j}x_{j} - x_{0} =  \sum_{j=1}^{s} a_{j}x_{j} + \left(1 - \sum_{j=1}^{s} a_{j}\right) x_{0} - x_{0}\\
&  =  \sum_{j=1}^{s} a_{j}(x_{j} - x_{0}).
\end{align*}
Since $\{x_{j} - x_{0}\}_{j=1}^{s}$ forms a basis for $\mathbb{R}^{s}$, the quantity $\sum_{j=1}^{s} |a_{j}|$ is a norm of the vector $x_{j} - x_{0} \in \mathbb{R}^{s}.$ Hence, there exists a constant $C_{1} >0$, which is independent of $x$  such that
$$\sum_{j=1}^{s} |a_{j}| \leq C_{1} |x - x_{0}| \leq C_{1} \, \mathrm{diam}(K).$$

Moreover,
\begin{align*}
|a_{0}| = \left| 1 - \sum_{j=1}^{s} a_{j} \right| \leq 1 + \sum_{j=1}^{s} |a_{j}| \leq 1 + C_{1} \, \mathrm{diam}(K).
\end{align*}
All in all we deduce that
$$\sum_{j=0}^{s} |a_{j}| \leq 	C_{2},$$
where $C$ is independent of $x$. 

All in all, we obtain
\begin{align*}
	\|h-I\|_{L^{\infty}\left(K\right)} \leq 
	C|h(x_{j_*}) - x_{j_*}|,
\end{align*}
where $C$ is independent of $h$.
\end{proof}
We now have the following Corollary.
\begin{corollary}\label{BG1}
Suppose that the IFS satisfies the weak separation condition. Let also $K$ be the attractor of the system Then, for every $\{x_{j}\}_{j=0}^{s} \subset K$ in general position, there exists an $M > 0$ depending only on the $\{c_{i}\}_{i=1}^{|\mathcal{I}|}$ and $\{x_{j}\}_{j=0}^{s}$ such that
\[|(f_{\alpha} -f_{\beta})(x_{j})| \geq \epsilon M r ,\]
for some $j \in \{0,\ldots,s\},$ which depends on $\alpha, \beta \in I_{r}$.
\end{corollary}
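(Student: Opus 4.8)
The plan is to combine Lemma~\ref{BG} with the weak separation condition applied at scale $b = r$. Fix $\{x_j\}_{j=0}^s \subset K$ in general position, and let $C>0$ be the constant produced by Lemma~\ref{BG} for this choice of points. Given $\alpha, \beta \in I_r$, the composition $f_\alpha^{-1} f_\beta$ is again an affine map of the form $x \mapsto A x + b$ (a composition of similarities: $f_\alpha^{-1}$ scales by $c_\alpha^{-1}$ and $f_\beta$ scales by $c_\beta$, so $A$ is $c_\alpha^{-1} c_\beta$ times an orthogonal matrix and $b$ is some vector). Hence Lemma~\ref{BG} applies to $h = f_\alpha^{-1} f_\beta$ and gives
\[
\|f_\alpha^{-1} f_\beta - I\|_{L^\infty(K)} \leq C \, |f_\alpha^{-1} f_\beta (x_{j_*}) - x_{j_*}|
\]
for some $j_* \in \{0,\dots,s\}$ depending on $\alpha,\beta$.

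Next I would invoke the weak separation condition. If $f_\alpha = f_\beta$ then $(f_\alpha - f_\beta)(x_j) = 0$ for all $j$, but in that case there is nothing to prove in the sense that the dichotomy in the statement should be read with the convention that we only need the estimate when $f_\alpha \neq f_\beta$; alternatively one states the corollary only for $\alpha,\beta$ with $f_\alpha \neq f_\beta$, which is the case of interest. When $f_\alpha \neq f_\beta$, the weak separation condition (Definition~\ref{normal weak separation}), applied with $b = r$, gives $\|f_\alpha^{-1} f_\beta - I\|_{L^\infty(K)} \geq \epsilon$. Combining with the displayed inequality yields
\[
|f_\alpha^{-1} f_\beta (x_{j_*}) - x_{j_*}| \geq \frac{\epsilon}{C}.
\]

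Finally I would undo the conjugation by $f_\alpha$. Since $f_\alpha$ is a similarity with ratio $c_\alpha$, for any two points $y,z$ we have $|f_\alpha(y) - f_\alpha(z)| = c_\alpha |y - z|$. Apply this with $y = f_\alpha^{-1} f_\beta(x_{j_*})$ and $z = x_{j_*}$: then $f_\alpha(y) = f_\beta(x_{j_*})$ and $f_\alpha(z) = f_\alpha(x_{j_*})$, so
\[
|(f_\beta - f_\alpha)(x_{j_*})| = c_\alpha \, |f_\alpha^{-1} f_\beta(x_{j_*}) - x_{j_*}| \geq \frac{\epsilon \, c_\alpha}{C}.
\]
Since $\alpha \in I_r$, by definition $c_\alpha \leq r < c_{\bar\alpha}$, and because $c_{\bar\alpha} = c_\alpha / c_{i_k} \leq c_\alpha / c_{\min}$ we get $c_\alpha > r\, c_{\min}$. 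Therefore $|(f_\beta - f_\alpha)(x_{j_*})| \geq \epsilon \, c_{\min} r / C$, which is the desired bound with $M = c_{\min}/C$, a constant depending only on the ratios $\{c_i\}$ and the chosen points $\{x_j\}$ (through $C$).

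The routine parts are the verification that $f_\alpha^{-1} f_\beta$ has the required affine form and the bookkeeping with the scaling ratios; the one point that needs slight care is the lower bound $c_\alpha > c_{\min} r$ for $\alpha \in I_r$, which is exactly where the definition of $I_r$ is used, and the mild issue of how to phrase the $f_\alpha = f_\beta$ case. I expect no serious obstacle: the whole corollary is essentially Lemma~\ref{BG} plus the separation condition plus a scaling argument.
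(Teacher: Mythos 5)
Your proposal is correct and follows essentially the same route as the paper: apply Lemma \ref{BG} to $h = f_\alpha^{-1} f_\beta$, invoke the weak separation condition to get $\|h - I\|_{L^\infty(K)} \geq \epsilon$, and undo the conjugation by $f_\alpha$ using $c_\alpha \geq c_{\min} r$ for $\alpha \in I_r$. You are in fact more careful than the paper's own (very terse) proof, which leaves the $f_\alpha = f_\beta$ case and the lower bound on $c_\alpha$ implicit.
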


\begin{proof}
Take
	$h = f_{\aaa}^{-1} f_{\bb}$ in Lemma \ref{BG}.
	Then, for $\aaa, \bb \in I_{r}$, let $j \leq s$ be such that
	$$|f_{\aaa}^{-1} f_{\bb}(x_{j}) - x_{j}| = |f_{\aaa}^{-1} f_{\bb}(x_{j}) - f_{\aaa}^{-1} f_{\aaa}(x_{j})| \geq M \|h - I\|_{L^{\infty}\left(K\right)}.$$
From the weak separation condition (Definition \ref{normal weak separation}) we immediately deduce that
	$$|(f_{\alpha} -f_{\beta})(x_{j})| \geq \epsilon M r,$$
	for some $M >0.$
	\end{proof}

Before we proceed to the proof of the main result of this Section, we want to introduce some terminology from graph theory, which will be useful in what follows. 

\begin{definition}
We define an undirected graph as an ordered pair $G=(V,E)$, where $V$ is a set of vertices and $E$ is a set of edges, which are unordered pairs of vertices.
\end{definition}

\begin{definition}
We say that an undirected graph $G=(V,E)$, with $n$ vertices is complete if every two vertices are connected with a unique edge. 
\end{definition}

\begin{definition}
An $r$-colouring of the edges of a graph $(V,E)$ is a function $\mathfrak{g} \colon E \to \{1,2,\cdots,r\}.$
\end{definition}

We now state a version of Ramsey's theorem. For a more detailed analysis of Ramsey theory, see Chapter 1 in the book of Katz \& Reimann \cite{katzramsey}.

\begin{theorem}[Ramsey's Theorem]
Suppose that we have $r$ colours and $(n_{1},\cdots,n_{r})$ integers. Then, there exist a number $R(r, n_{1}, n_{2}, \cdots, n_{r})$ such that if $G$ is a complete graph with at least $R(r, n_{1}, n_{2}, \cdots, n_{r})$ vertices, there exists an $i \in \{1,\dots,r\}$ and a complete subgraph $T$ of $G$ of order $n_{i}$ such that all the edges in $T$ are coloured with the colour $i$. 
\end{theorem}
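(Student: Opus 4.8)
The plan is to prove the theorem first in the two-colour case and then bootstrap to arbitrary $r$ by merging colours; throughout we use the trivial monotonicity observation that a complete graph on more than $R(\cdots)$ vertices contains a complete subgraph on exactly $R(\cdots)$ vertices, so it suffices to produce, for suitable $N$, the desired monochromatic clique in every $r$-coloured complete graph on exactly $N$ vertices.

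For $r=2$ I would show that a number $R(2,p,q)$ exists satisfying the recurrence $R(2,p,q) \le R(2,p-1,q) + R(2,p,q-1)$, arguing by induction on $p+q$. The base cases are $R(2,p,1) = R(2,1,q) = 1$, since a single vertex is vacuously a monochromatic clique of order $1$. For the inductive step, let $G$ be a complete graph on $N = R(2,p-1,q) + R(2,p,q-1)$ vertices whose edges are coloured with colours $1$ and $2$. Fix a vertex $v$ and partition the remaining $N-1$ vertices into the set $A$ of those joined to $v$ by a colour-$1$ edge and the set $B$ of those joined to $v$ by a colour-$2$ edge. Since $|A| + |B| = R(2,p-1,q) + R(2,p,q-1) - 1$, the pigeonhole principle forces $|A| \ge R(2,p-1,q)$ or $|B| \ge R(2,p,q-1)$. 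In the former case the inductive hypothesis applied to $A$ yields either a colour-$2$ clique of order $q$, and we are done, or a colour-$1$ clique of order $p-1$, which together with $v$ is a colour-$1$ clique of order $p$; the latter case is symmetric. This establishes the two-colour case.

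For general $r \ge 3$ I would reduce the number of colours by one via the bound
\[
	R(r,n_{1},\dots,n_{r}) \le R\bigl(r-1,\,n_{1},\dots,n_{r-2},\,R(2,n_{r-1},n_{r})\bigr),
\]
proving the existence of $R(r,n_{1},\dots,n_{r})$ by induction on $r$, anchored by $R(1,n_{1}) = n_{1}$. Given a complete graph $G$ on the right-hand side number of vertices with an $r$-colouring $\mathfrak{g}$, define an $(r-1)$-colouring $\mathfrak{g}'$ that leaves colours $1,\dots,r-2$ unchanged and recolours every edge of colour $r-1$ or $r$ with one new colour. By the inductive hypothesis, either $\mathfrak{g}'$ produces a monochromatic clique of some colour $i \le r-2$ of order $n_{i}$, and we are done, or it produces a complete subgraph $T$ of order $R(2,n_{r-1},n_{r})$ all of whose edges originally had colour $r-1$ or $r$; restricting $\mathfrak{g}$ to $T$ and applying the two-colour case to the colour pair $\{r-1,r\}$ yields a monochromatic clique of order $n_{r-1}$ or $n_{r}$ as required. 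The only point demanding care is the bookkeeping of the two nested inductions (on $p+q$ for the two-colour case, and on $r$ for the colour-merging reduction) together with their base cases; once these are in place the existence of $R(r,n_{1},\dots,n_{r})$ follows by assembling the bounds, and I anticipate no genuine obstacle beyond this.
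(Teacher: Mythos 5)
Your proof is correct: the two-colour argument is the classical Erd\H{o}s--Szekeres induction on $p+q$ via the recurrence $R(2,p,q)\le R(2,p-1,q)+R(2,p,q-1)$, and the reduction from $r$ colours to $r-1$ by merging the last two colours and nesting the two-colour bound is the standard multicolour bootstrap; the base cases and the pigeonhole step are all handled properly. There is nothing in the paper to compare against, since the paper states Ramsey's theorem without proof and simply refers the reader to Chapter~1 of Katz \& Reimann \cite{katzramsey}; your argument is exactly the textbook proof that such a reference would contain, and it supplies everything the paper actually uses downstream (namely Corollary~\ref{Ramsey}, the bound on the order of a complete graph all of whose monochromatic cliques are small). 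The only cosmetic point is that the theorem as stated speaks of graphs with \emph{at least} $R(r,n_1,\dots,n_r)$ vertices, which you correctly dispose of at the outset by restricting to an induced complete subgraph of the exact size.
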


An immediate corollary is the following.
\begin{corollary}\label{Ramsey}
Suppose that $G$ is a complete graph and suppose $N \in \mathbb{N}.$ Suppose also that we have an $r$-colouring of the edges of $G$. If every monochromatic complete subgraph of $G$ has order at most $N$, then
\[
	|G| < R(r,N+1,...,N+1).
\]
\end{corollary}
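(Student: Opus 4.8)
The plan is to argue by contraposition directly from the stated version of Ramsey's theorem. Assume, for contradiction, that $|G| \geq R(r,N+1,\ldots,N+1)$, where the tuple $(N+1,\ldots,N+1)$ has $r$ entries. We are given an $r$-colouring $\mathfrak{g} \colon E \to \{1,\ldots,r\}$ of the edges of the complete graph $G$.

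Applying Ramsey's theorem with the choice of parameters $n_1 = n_2 = \cdots = n_r = N+1$: since $G$ is a complete graph on at least $R(r,N+1,\ldots,N+1)$ vertices, there exist an index $i \in \{1,\ldots,r\}$ and a complete subgraph $T$ of $G$ of order $n_i = N+1$ all of whose edges are assigned the colour $i$ by $\mathfrak{g}$. In particular, $T$ is a monochromatic complete subgraph of $G$ of order $N+1 > N$.

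This contradicts the hypothesis that every monochromatic complete subgraph of $G$ has order at most $N$. Hence our assumption was false, and we conclude $|G| < R(r,N+1,\ldots,N+1)$.

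There is no real obstacle here; the only thing to be careful about is purely bookkeeping, namely that the tuple passed to $R$ has exactly $r$ coordinates, each equal to $N+1$, and that ``order at most $N$'' is the strict negation of ``contains a monochromatic complete subgraph of order $N+1$'', so that the contradiction is genuine. One could equivalently phrase the whole argument in one line as the contrapositive of Ramsey's theorem specialised to equal parameters, but the contradiction formulation above is the cleanest to write out.
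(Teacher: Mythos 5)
Your proof is correct and takes essentially the same approach as the paper: assume $|G| \geq R(r,N+1,\ldots,N+1)$, invoke Ramsey's theorem with all $n_i = N+1$ to produce a monochromatic complete subgraph of order $N+1$, and contradict the hypothesis. The extra bookkeeping remarks are fine but not needed.
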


\begin{proof}
Suppose that $|G| \geq R(r,N+1,...,N+1)$. Then, by Ramsey's theorem, there exists a complete monochromatic subgraph of order $N+1$, which violates the hypothesis.
\end{proof}

We now show directly that when the IFS satisfies the weak separation property then the Assouad dimension of the attractor equals its box--counting dimension. In particular, since $\romd_{A}(K) \geq \romd_{B}(K)$, we only need to prove the upper bound. Note that the Hausdorff dimension is also equal in this case, since $\romd_{B}(K) = \romd_{H}(K)$ (see Falconer \cite{FA}). The proof provides a useful argument for the more involved analysis of sets of differences which follows in the next section.

\begin{theorem}
Suppose that $\mathcal{F} = \{f_{i} \colon \mathbb{R}^{s} \to \mathbb{R}^{s} \}$ is an iterated function system that satisfies the weak separation property. Let $K$ be the attractor of the system and suppose that $K$ is not contained in a hyperplane. Then,
$$\romd_{A}(K) = \romd_{B}(K).$$
\end{theorem}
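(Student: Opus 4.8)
Since $\romd_{A}(K) \ge \romd_{B}(K)$ in general, the plan is to prove only the reverse inequality, and for this it suffices to show that for every $d > \romd_{B}(K)$ the attractor $K$ is $(M,d)$-homogeneous for some $M = M(d)$: then $\romd_{A}(K) \le d$, and letting $d \downarrow \romd_{B}(K)$ gives $\romd_{A}(K) \le \romd_{B}(K)$. So fix $d > \romd_{B}(K)$; then $N(K,\epsilon) \le C\epsilon^{-d}$ for all $\epsilon > 0$, with $C$ depending on $d$ and $K$ (the bound for small $\epsilon$ is the definition of the box dimension, and for $\epsilon$ bounded below it holds after enlarging $C$). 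Since $K$ is not contained in a hyperplane, we may fix points $\{x_{j}\}_{j=0}^{s} \subset K$ in general position: starting from any $x_{0} \in K$, having chosen affinely independent $x_{0},\dots,x_{k}$ with $k < s$, their affine span is contained in a hyperplane, so $K \not\subseteq A(x_{0},\dots,x_{k})$ and we may pick $x_{k+1} \in K$ outside it; after $s+1$ steps the points are affinely independent, i.e.\ in general position. Let $M_{0} > 0$ be the constant that Corollary \ref{BG1} associates to this choice of $\{x_{j}\}$, and $\epsilon$ the weak-separation constant.

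Now fix $x \in K$ and $0 < \rho < r$; we may assume $r$ is small, since for $r$ bounded below the homogeneity bound is immediate from the estimate $N(K,\sigma) \le C\sigma^{-d}$. By Proposition \ref{properties of K}(1) we have $K = \bigcup_{\alpha \in I_{r}} f_{\alpha}(K)$, so $K \cap B(x,r)$ is covered by those $f_{\alpha}(K)$, $\alpha \in I_{r}$, that meet $B(x,r)$; as $f_{\alpha}(K)$ depends only on the map $f_{\alpha}$, it is enough to bound the number of \emph{distinct} maps in $S := \{f_{\alpha} : \alpha \in I_{r},\ f_{\alpha}(K) \cap B(x,r) \ne \emptyset\}$, and then cover each corresponding piece. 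Bounding $|S|$ is the crux. For $f_{\alpha} \in S$, every point of $f_{\alpha}(K)$ lies within $r + c_{\alpha}\,\mathrm{diam}(K) \le r(1 + \mathrm{diam}(K))$ of $x$, so the points $f_{\alpha}(x_{0}),\dots,f_{\alpha}(x_{s})$ all lie in a fixed ball whose radius is a constant multiple of $r$. On the other hand, if $f_{\alpha} \ne f_{\beta}$ with $\alpha,\beta \in I_{r}$, Corollary \ref{BG1} gives $|f_{\alpha}(x_{j}) - f_{\beta}(x_{j})| \ge \epsilon M_{0} r$ for some $j$. Hence the map $f_{\alpha} \mapsto (f_{\alpha}(x_{0}),\dots,f_{\alpha}(x_{s})) \in (\mathbb{R}^{s})^{s+1}$ is injective on $S$ (two affine maps agreeing on the affinely spanning set $\{x_{j}\}$ coincide) and carries distinct elements of $S$ to points that are pairwise at distance $\ge \epsilon M_{0} r$, all lying in a ball of radius at most a constant multiple of $r$. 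A packing estimate in $(\mathbb{R}^{s})^{s+1}$ then bounds $|S|$ by a constant $L$ depending only on $s$, $\mathrm{diam}(K)$, $\epsilon$, $M_{0}$ and $c_{min}$, and in particular \emph{not} on $r$ or on $x$.

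Given $|S| \le L$, it remains to cover each piece $f_{\alpha}(K)$, $f_{\alpha} \in S$, by balls of radius $\rho$. As $f_{\alpha}$ is a similarity of ratio $c_{\alpha}$, the image under $f_{\alpha}$ of a minimal $(\rho/c_{\alpha})$-cover of $K$ is a $\rho$-cover of $f_{\alpha}(K)$, so $N(f_{\alpha}(K),\rho) = N(K,\rho/c_{\alpha}) \le C(c_{\alpha}/\rho)^{d} \le C(r/\rho)^{d}$, using $c_{\alpha} \le r$ for $\alpha \in I_{r}$ (and $c_{\alpha} > c_{min} r$ keeps $\rho/c_{\alpha}$ in the bounded range where the box-dimension bound is available). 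Summing over the at most $L$ pieces, and absorbing the usual constant incurred by recentring the covering balls inside $K \cap B(x,r)$, we obtain $N(K \cap B(x,r),\rho) \le 2^{d} L C (r/\rho)^{d}$. Thus $K$ is $(2^{d}LC,d)$-homogeneous for every $d > \romd_{B}(K)$, which completes the proof.

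I expect the bound on $|S|$ --- the number of distinct similarities at scale $r$ whose images meet a fixed $r$-ball --- to be the one substantive point: it is precisely here that the weak separation condition is used, through Corollary \ref{BG1} (hence Lemma \ref{BG}), and here that the hypothesis ``$K$ not contained in a hyperplane'' enters, to supply the $s+1$ points of $K$ in general position. The remaining ingredients --- the self-similar covering at scale $r$ from Proposition \ref{properties of K}, the scaling of covering numbers under the similarities $f_{\alpha}$, and the passage from $(M,d)$-homogeneity for all $d > \romd_{B}(K)$ to the dimension identity --- are routine. This argument is also the template for the set-of-differences estimate of Section \ref{wspdd}, with quadruples $(\alpha,\beta,\gamma,\delta) \in I_{r}^{4}$ in place of pairs $(\alpha,\beta)$.
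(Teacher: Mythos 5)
Your proposal is correct, and it follows the paper's overall skeleton: reduce to showing that for each $d > \romd_{B}(K)$ the set $K$ is homogeneous with exponent $d$; use Proposition \ref{properties of K} to cover $B(x,r)\cap K$ by the pieces $f_{\aaa}(K)$, $\aaa\in I_{r}$, meeting the ball; bound the number of \emph{distinct} such maps independently of $r$ and $x$ (this is where the weak separation condition and the general-position points from the ``not in a hyperplane'' hypothesis enter, via Corollary \ref{BG1}); and then cover each piece by rescaled copies of a $(\rho/r)$-cover of $K$. Where you genuinely diverge is in the combinatorial core. The paper handles the fact that the separating index $j$ depends on the pair $(f_{\aaa},f_{\bb})$ by colouring the edges of the complete graph on the distinct maps with the index $j$, bounding each monochromatic clique by a packing argument in $\mathbb{R}^{s}$, and invoking Ramsey's theorem (Corollary \ref{Ramsey}) to bound the whole graph. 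You instead pass to the tuple map $f_{\aaa}\mapsto(f_{\aaa}(x_{0}),\dots,f_{\aaa}(x_{s}))\in(\mathbb{R}^{s})^{s+1}$: since \emph{some} coordinate block of two distinct tuples differs by at least $\epsilon M_{0}r$, the tuples themselves are $\epsilon M_{0}r$-separated in the product space, while all of them lie in a ball of radius $O(r)$, so a single packing estimate in $\mathbb{R}^{s(s+1)}$ bounds their number. This eliminates Ramsey's theorem entirely, is more elementary, and yields an explicit (and likely much smaller) constant; the paper's Ramsey formulation has the advantage of transferring verbatim to the set-of-differences argument of Section \ref{wspdd}, but your product-space packing would work there too, with pairs of indices $(i,j)$ and tuples in $(\mathbb{R}^{s})^{(M+1)^{2}}$. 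Your additional touches --- the explicit construction of the general-position points, the remark that the box-counting bound extends to all scales after enlarging $C$, and the $2^{d}$ recentring factor needed because the covering balls must be centred in $K\cap B(x,r)$ --- are all correct and in fact tidy up points the paper leaves implicit.
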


\begin{proof}
Let $d > \romd_{B}(K).$ 
Suppose, wlog that
$$K \subset B_{1}(0).$$
Then, for any $y \in K$, we have
$$K \subset B_{2}(y).$$
Suppose $x \in K$ and $r >0$. 
Let 
\[
	G_{r}(x) = \{f_{\aaa} : \aaa \in I_{r}, B_{r}(x) \cap f_{\aaa}(K) \neq \emptyset\}.
\]
Then, for any $y \in K$, we have 
\begin{align*}
	B_{r}(x) \cap K & = \cup_{\aaa \in I_{r}} B_{r}(x) \cap f_{\aaa}(K)\\
	& = \cup_{f_{\aaa} \in G_{r}(x)} B_{r}(x) \cap f_{\aaa}(K)\\
	& \subset \cup_{f_{\aaa} \in G_{r}(x)} B_{r}(x) \cap f_{\aaa}(B_{2}(y)),
\end{align*}
which implies that
\begin{equation}\label{cover of K}
B_{r}(x) \cap K \subset \cup_{f_{\aaa} \in G_{r}(x)} B_{r}(x) \cap B_{2r}(f_{\aaa}(y)),
\end{equation}
for all $y \in K$.
We claim that we can bound the cardinality of $G_{r}(x)$ independently of $r,x$. 
Since $K$ is not contained in a hyperplane, there exist $\{x_{j}\}_{j=0}^{s} \subset K$ in general position. By Lemma \ref{BG1}, there exists an $\epsilon >0$ such that for every choice of $f_{\aaa} ,f_{\bb} \in G_{r}(x)$, there exists a $j \leq s$ such that
\begin{equation}\label{WSPR}
|f_{\aaa}(x_{j}) - f_{\bb}(x_{j})| \geq \epsilon r.
\end{equation}
Let 
\[
	T_{r}(x) = \{0 \leq j \leq s : |f_{\aaa}(x_{j}) - f_{\bb}(x_{j})| \geq \epsilon r, \qquad \text{for some} \qquad f_{\aaa}, f_{\bb} \in G_{r}(x).\}
\]
Obviously, $|T_{r}(x)| \leq s+1$, for all $r,x$. We now consider $G_{r}(x)$ as an unordered graph with vertices $f_{\aaa}$ and edges $E	= \{\{f_{\aaa},f_{\bb}\} : f_{\aaa}, f_{\bb} \in G_{r}(x)\}$. For each edge $\{f_{\aaa},f_{\bb}\}$, we assign a colour $j \in T_{r}(x)$ such that
$$|f_{\aaa}(x_{j}) - f_{\bb}(x_{j})| \geq \epsilon r.$$ 

Suppose that $P^{j}_{r}(x)$ is a complete monochromatic subgraph of $G_{r}(x)$, of color $j \leq s+1$.
Then, for every $f_{\aaa} \in P^{j}_{r}(x)$, we have
\begin{align*}
	B(x,r) \cap f_{\aaa}(F) \neq \emptyset & \Rightarrow B_{r}(x) \cap f_{\aaa}(B_{2}(x_{j})) \neq \emptyset \\
	& \Rightarrow B_{r}(x) \cap B_{2r}(f_{\aaa}(x_{j})) \neq \emptyset,
\end{align*}
which implies that
\begin{equation}\label{MS1}
|f_{\aaa}(x_{j}) - x| \leq 3r.
\end{equation}
Moreover, for any $f_{\aaa},f_{\bb} \in P^{j}_{r}(x),$ we have by definition 
\begin{equation}\label{MS2}
	|f_{\aaa}(x_{j}) - f_{\bb}(x_{j})| \geq \epsilon r.
\end{equation}
In particular $f_{\aaa}(x_{j}) \neq f_{\bb}(x_{j}),$ for all $f_{\aaa}, f_{\bb} \in P^{j}_{r}(x).$ Consequently, in order to count the number of vertices in $P^{j}_{r}(x)$, it suffices to count the points $f_{\aaa}(x_{j})$, for $f_{\aaa} \in P^{j}_{r}(x)$. By \eqref{MS2}, the balls of radius $\epsilon r /2$, with centres $f_{\aaa}(x_{j}),$ for $f_{\aaa} \in P^{j}_{r}(x)$ are disjoint and by \eqref{MS1}, all the centres lie in a ball of radius $3r$, centred at $x$.
Thus,
\[
\bigcup_{f_{\aaa} \in P^{j}_{r}(x)} B_{\frac{\epsilon r}{2}} (f_{\aaa}(x_{j})) \subseteq B_{3r +\epsilon r}(x).
\]
Therefore, if $\mu$ is the $s$-dimensional Lebesque measure, we have
\[
|P^{r}_{j}(x)| \leq \frac{\mu\left( B_{3r +\epsilon r}(x) \right)}{\mu \left( B_{\frac{\epsilon r}{2}} \right) } = M',
\]
which is independent of $r,x.$
Since $G_{r}(x)$ is a complete graph and we bounded the order of any complete monochromatic subgraph independently of $r,x$, we have by Corollary \ref{Ramsey} that
$$|G_{r}(x)| \leq  M,$$
independent of $r,x$.

We now enumerate $G_{r}(x)$ using the following parametrisation.
$$G_{r}(x) = \{f_{\aaa_{k}}\}_{k=1}^{M}.$$
Now, let $N = N(K, \rho/r)$ denote the number of balls of radius $\rho/r$ required to cover $K$.
Let the centres of those balls be $y_{j}$, for $j \leq N.$ Then, by \eqref{cover of K}, we have
\begin{align*}
B_{r}(x) \cap K & \subseteq \cup_{k=1}^{M} f_{\aaa_{k}} (K)\\
& \subset \cup_{k=1}^{M} f_{\aaa_{k}} \left(\cup_{j=1}^{N} B_{\rho/ r}(y_{j})\right)\\
& = \cup_{k=1}^{M} \cup_{j=1}^{N} B_{\rho}(f_{\aaa_{k}} (y_{j}))
\end{align*} We know by definition of the box--counting dimension that there exists some constant $C >0$ such that
$$N \leq C \left(\frac{r}{\rho}\right)^{d}.$$ 
Thus,
$$N_{K}(r,\rho) \leq M N \leq M C \left(\frac{r}{\rho}\right)^{d}.$$
Therefore, $d \geq \romd_{A}(K)$ and since $d > \romd_{B}(K)$ was arbitrary we have
$\romd_{A}(K) \leq \romd_{B}(K).$
\end{proof}

\section{The weak separation condition for differences}\label{wspdd}

In this section, we study differences of attractors of Iterated Function Systems in Euclidean spaces. We want to establish non-trivial bounds for the Assouad dimension of the set of differences in terms of the Assouad dimension of the attractor. In particular, we show that under a suitable separation condition, the Assouad dimension of $K-K$ is bounded above by twice the Assouad dimension of $K$. Note that such non-trivial bounds do not hold in general as there are examples on the real line due to Henderson \cite{Henderson} where $\romd_{A}(K) <\epsilon$, for any $\epsilon >0$ and $\romd_{A}(K-K) =1$.

\begin{definition}
Suppose that $\mathcal{F} = \{f_{i} \colon \mathbb{R}^{s} \to \mathbb{R}^{s}\}$ is a system of contracting similarities. Suppose that $K$ is the attractor of the system. The IFS satisfies the weak separation condition for differences if there exist $M, \epsilon >0$ and a collection of points $\{x_{j}\}_{j=0}^{M} \in K$ such that for every given $0<b<1$ and every $\aaa, \bb, \cc, \dd \in I_{b},$  we have 
\[
	f_{\aaa}(K) - f_{\bb}(K) = f_{\cc}(K) - f_{\dd}(K)
\]
 or
\[
	\left|f_{\aaa}(x_{i}) - f_{\bb}(x_{j}) - f_{\cc}(x_{i}) + f_{\dd}(x_{j})\right| \geq \epsilon b,
\]
for some $i, j \leq M$ that depend on $\aaa, \bb, \cc, \dd \in I_{b}.$ 
\end{definition}

We also formulate a stronger separation condition, which involves the definition of Hausdorff distance, whose definition we now recall.
\begin{definition}
Suppose $(X,d)$ is a metric space and let $A,B$ be compact subsets of $X$. Then the Hausdorff distance is defined as
\[
	d_{H}(A,B) = \max\{\dist(A,B), \dist(B,A)\}.
\]
\end{definition}

We prove that the weak separation for differences is satisfied if for any scale $b <1$, the sets $f_{\aaa}(K) - f_{\bb}(K)$, $f_{\cc}(K) - f_{\dd}(K)$ for $\aaa, \bb, \cc, \dd \in I_{b}$ are either equal or their Hausdorff distance is bounded away from zero. 
\begin{lemma}\label{sufficient condition for weak separation}
Suppose that $\mathcal{F} = \{f_{i} \colon \mathbb{R}^{s} \to \mathbb{R}^{s}\}$ is a system of contracting similarities. Suppose that there exists a $\zeta >0$ such that for any given $0<b<1$ we have that either
\[
	f_{\aaa}(K) - f_{\bb}(K) = f_{\cc}(K) - f_{\dd}(K)
\]
 or
 \[
 	d_{H}(f_{\aaa}(K) - f_{\bb}(K), f_{\cc}(K) - f_{\dd}(K)) \geq \zeta b,
 \]
 for all $\aaa, \bb, \cc, \dd \in I_{b}.$
Then, the weak separation condition for differences is satisfied.
\end{lemma}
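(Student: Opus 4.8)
The plan is to exploit the fact that each difference set $f_{\aaa}(K)-f_{\bb}(K)$ is the image of the \emph{fixed} compact set $K\times K$ under an affine map, and to quantify the Hausdorff distance between two such sets by finitely many point evaluations, in the spirit of Lemma \ref{BG}. First I would reduce to the case where $K$ is not contained in a hyperplane: since each $f_{i}$ is an affine bijection of $\mathbb{R}^{s}$ and $f_{i}(K)\subseteq K$, the affine hull $H$ of $K$ satisfies $f_{i}(H)\subseteq H$, and comparing dimensions gives $f_{i}(H)=H$, so each $f_{i}$ restricts to a contracting similarity of $H$ (with the same ratio) for which $K\subseteq H$ is still the attractor; passing to $H\cong\mathbb{R}^{\dim H}$ we may assume $K$ affinely spans $\mathbb{R}^{s}$. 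Fix $\{x_{j}\}_{j=0}^{s}\subset K$ in general position and put $M=s$.

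For $\aaa,\bb\in I_{b}$ define the affine map $g_{\aaa\bb}\colon\mathbb{R}^{s}\times\mathbb{R}^{s}\to\mathbb{R}^{s}$ by $g_{\aaa\bb}(x,y)=f_{\aaa}(x)-f_{\bb}(y)$, so that $f_{\aaa}(K)-f_{\bb}(K)=g_{\aaa\bb}(K\times K)$. The first step is the elementary observation that, since both difference sets are parametrised by the same set $K\times K$,
\[
	d_{H}\big(g_{\aaa\bb}(K\times K),\ g_{\cc\dd}(K\times K)\big)\ \le\ \|g_{\aaa\bb}-g_{\cc\dd}\|_{L^{\infty}(K\times K)},
\]
because for $z=g_{\aaa\bb}(x,y)$ the point $g_{\cc\dd}(x,y)$ lies in the second set and is within $\|g_{\aaa\bb}-g_{\cc\dd}\|_{L^{\infty}(K\times K)}$ of $z$, and symmetrically. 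Hence, whenever $f_{\aaa}(K)-f_{\bb}(K)\ne f_{\cc}(K)-f_{\dd}(K)$, the hypothesis forces $\|g_{\aaa\bb}-g_{\cc\dd}\|_{L^{\infty}(K\times K)}\ge\zeta b$.

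The second step controls this sup-norm by the values at the chosen points. Applying Lemma \ref{BG} to $\phi+I$ shows that there is a constant $C>0$, depending only on $K$ and $\{x_{j}\}_{j=0}^{s}$ (hence independent of $\aaa,\bb,\cc,\dd$ and $b$), such that $\|\phi\|_{L^{\infty}(K)}\le C\max_{0\le j\le s}|\phi(x_{j})|$ for every affine $\phi\colon\mathbb{R}^{s}\to\mathbb{R}^{s}$. Writing $h=g_{\aaa\bb}-g_{\cc\dd}$, we have $h(x,y)=u(x)-w(y)$ with $u=f_{\aaa}-f_{\cc}$ and $w=f_{\bb}-f_{\dd}$ affine, so $x\mapsto h(x,y)$ is affine for each fixed $y$ and $y\mapsto h(x,y)$ is affine for each fixed $x$. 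Applying the bound successively in the two variables gives
\[
	\|h\|_{L^{\infty}(K\times K)}\ \le\ C^{2}\max_{0\le i,j\le s}\big|h(x_{i},x_{j})\big|\ =\ C^{2}\max_{0\le i,j\le s}\big|f_{\aaa}(x_{i})-f_{\bb}(x_{j})-f_{\cc}(x_{i})+f_{\dd}(x_{j})\big|.
\]
Combining with the first step: if the two difference sets differ, then $|f_{\aaa}(x_{i})-f_{\bb}(x_{j})-f_{\cc}(x_{i})+f_{\dd}(x_{j})|\ge(\zeta/C^{2})\,b$ for some $i,j\le s$, which is exactly the weak separation condition for differences with $M=s$ and $\epsilon=\zeta/C^{2}$. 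I expect the only delicate point to be the passage from Lemma \ref{BG} (stated for maps of the form $Ax+b$ and the quantity $\|h-I\|$) to the clean statement for an arbitrary affine $\phi$, with a constant uniform over all the relevant $h$ and, crucially, independent of the scale $b$ and of the four indices; the two-variable iteration and the Hausdorff-distance estimate are then routine.
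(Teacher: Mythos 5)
Your proof is correct, but it takes a genuinely different route from the paper's. The paper's argument is purely metric: it fixes a $\zeta/4$-net $\{x_{j}\}_{j=0}^{M}$ of $K$ (so $M$ depends on $\zeta$ and $K$ but not on $b$), picks the points of $K$ realising the Hausdorff distance, replaces them by nearby net points, and uses the contraction ratios $c_{\aaa},c_{\bb}\leq b$ to show the resulting perturbation of $f_{\aaa}(x)-f_{\bb}(y)$ is at most $\zeta b/2$, yielding the condition with $\epsilon=\zeta/2$ via the triangle inequality. You instead exploit the affine structure: the inequality $d_{H}\bigl(g_{\aaa\bb}(K\times K),g_{\cc\dd}(K\times K)\bigr)\leq\|g_{\aaa\bb}-g_{\cc\dd}\|_{L^{\infty}(K\times K)}$ together with a two-variable iteration of Lemma \ref{BG} (in its clean form $\|\phi\|_{L^{\infty}(K)}\leq C\max_{j}|\phi(x_{j})|$ for affine $\phi$, obtained by taking $h=\phi+I$, and valid with codomain any normed space since only affine interpolation in the domain is used). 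Your approach buys a fixed collection of only $s+1$ test points independent of $\zeta$, and it makes the parallel with the treatment of the ordinary weak separation condition in Section \ref{zerner weak separation} explicit; the cost is the preliminary reduction to the case where $K$ affinely spans (which you handle correctly via invariance of the affine hull under the $f_{i}$) and a less explicit constant $\epsilon=\zeta/C^{2}$. The paper's net argument avoids both of these and needs no non-degeneracy assumption, at the price of a larger, $\zeta$-dependent point collection. Both arguments are sound; each step you flag as delicate (the passage from Lemma \ref{BG} to the statement for arbitrary affine maps, uniformly in $\aaa,\bb,\cc,\dd$ and $b$) does go through as you describe.
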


\begin{proof}
Let $\aaa, \bb, \cc, \dd \in I_{b}$. Suppose that
\[
	f_{\aaa}(K) - f_{\bb}(K) \neq f_{\cc}(K) - f_{\dd}(K).
\]
Let $\{x_{j}\}_{j=0}^{M}$ be an $\zeta/4$ net in $K$, i.e. 
\[
	K \subset \bigcup_{j=0}^{M} B_{\zeta/4}(x_{j}) \qquad \text{and} \qquad |x_{i} - x_{j}| \geq \frac{\zeta}{4}.
\]
Assume without loss of generality that
$$d_{H}(f_{\aaa}(K) - f_{\bb}(K), f_{\cc}(K) - f_{\dd}(K)) = \dist(f_{\aaa}(K) - f_{\bb}(K), f_{\cc}(K) - f_{\dd}(K)) \geq \zeta b.$$
Using the compactness of $K$, let $x,y \in K$ be such that
$$\dist(f_{\aaa}(K) - f_{\bb}(K), f_{\cc}(K) - f_{\dd}(K)) = \dist(f_{\aaa}(x) - f_{\bb}(y), f_{\cc}(K) - f_{\dd}(K)).$$
Let $i,j \leq M$ be such that
$$|x - x_{i}| \leq \frac{\zeta}{4} \qquad \text{and} \qquad |y-x_{j}| \leq \frac{\zeta}{4}.$$
Again by the compactness of $K$ suppose that $s,t \in K$ are such that
$$\dist(f_{\aaa}(x) - f_{\bb}(y), f_{\cc}(K) - f_{\dd}(K)) = |f_{\aaa}(x) - f_{\bb}(y) - f_{\cc}(s) + f_{\dd}(t)|.$$
Then, we deduce that
\begin{align*}
|f_{\aaa}(x_{i}) - f_{\bb}(x_{j}) - f_{\cc}(x_{i}) + f_{\dd}(x_{j})| & \geq \dist(f_{\aaa}(x_{i}) - f_{\bb}(x_{j}), f_{\cc}(K) - f_{\dd}(K))\\
& = |f_{\aaa}(x_{i}) - f_{\bb}(x_{j}) - f_{\cc}(s) + f_{\dd}(t)|,
\end{align*}
which implies that
\begin{align*} 
|f_{\aaa}(x_{i}) - f_{\bb}(x_{j}) - f_{\cc}(x_{i}) + f_{\dd}(x_{j})| & \geq |f_{\aaa}(x) - f_{\bb}(y) - f_{\cc}(s) + f_{\dd}(t)|\\
& - |f_{\aaa}(x_{i}) - f_{\bb}(x_{j}) - f_{\aaa}(x) + f_{\bb}(y)|\\
& \geq \zeta b - 2 \frac{\zeta b}{4} = \frac{\zeta b}{2}. 
\end{align*}

By taking $\epsilon = \zeta/2$, the proof is complete.
\end{proof}
It is an open question whether the above condition is actually equivalent with the weak separation condition for differences.
We now state and prove the main result of this section. 

\begin{theorem}\label{Differences}
Suppose that $\mathcal{F}= \{f_{i} \colon \mathbb{R}^{s} \to \mathbb{R}^{s}\}$ is a system of contracting similarities and let $K$ be the attractor of the system. If the IFS satisfies the weak separation for differences then 
\begin{equation}\label{AS-2}
\romd_{A}(K-K) \leq 2 \romd_{A}(K).
\end{equation}
\end{theorem}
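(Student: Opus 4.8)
The plan is to mimic the proof of the earlier theorem on the Assouad dimension of $K$ itself, but working in $\mathbb{R}^{2s}$ with the product fractal $K \times K$ in mind, since $K - K$ is the image of $K \times K$ under the Lipschitz map $(x,y) \mapsto x - y$. More directly, I would cover a small ball $B_r(z)$ in $K - K$ by pieces of the form $f_\alpha(K) - f_\beta(K)$ with $\alpha, \beta \in I_r$, then bound the number of \emph{distinct} such difference sets that can meet $B_r(z)$, independently of $r$ and $z$. Once that bound, say $M$, is established, each piece $f_\alpha(K) - f_\beta(K)$ is a similar copy of $K - K$ scaled by a factor comparable to $r$ (since $c_\alpha, c_\beta \in [c_{\min} r, r]$), so covering each piece at scale $\rho$ costs at most $N(K - K, \text{const}\cdot \rho/r)$ balls; using $\romd_B(K - K) \le 2\romd_A(K)$ — which follows because $K - K$ is a Lipschitz image of $K \times K$ and $\romd_B(K \times K) \le 2\romd_A(K) = 2\romd_B(K)$ under the weak separation hypothesis built into the differences condition (or one can bound the box dimension of $K-K$ directly) — gives $N_{K-K}(r,\rho) \le M \cdot C (r/\rho)^{2\romd_A(K)}$, which yields \eqref{AS-2}.

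The combinatorial heart is the bound on the number of distinct difference sets $f_\alpha(K) - f_\beta(K)$ meeting $B_r(z)$. Fix $\{x_j\}_{j=0}^M \subset K$ from the weak separation condition for differences. Consider the collection $\mathcal{G}_r(z)$ of pairs $(\alpha,\beta) \in I_r \times I_r$ such that $(f_\alpha(K) - f_\beta(K)) \cap B_r(z) \neq \emptyset$, and quotient by the equivalence $(\alpha,\beta) \sim (\gamma,\delta)$ iff $f_\alpha(K) - f_\beta(K) = f_\gamma(K) - f_\delta(K)$; let $\bar{\mathcal G}_r(z)$ be the set of equivalence classes. For two inequivalent classes represented by $(\alpha,\beta)$ and $(\gamma,\delta)$, the separation condition gives indices $i,j \le M$ with $|f_\alpha(x_i) - f_\beta(x_j) - f_\gamma(x_i) + f_\delta(x_j)| \ge \epsilon r$. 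Form the complete graph on $\bar{\mathcal G}_r(z)$ and colour each edge by the pair $(i,j) \in \{0,\dots,M\}^2$ it produces — at most $(M+1)^2$ colours. On a monochromatic clique of colour $(i,j)$, the points $p_{(\alpha,\beta)} := f_\alpha(x_i) - f_\beta(x_j)$ are pairwise $\epsilon r$-separated; and since each class meets $B_r(z)$, these points all lie in a ball of radius comparable to $r$ about $z$ (using that $f_\alpha, f_\beta$ contract $K$ into sets of diameter $\lesssim r$, so $f_\alpha(x_i) - f_\beta(x_j)$ is within $O(r)$ of a point of $B_r(z)$). A volume/packing argument in $\mathbb{R}^s$ then bounds each monochromatic clique by a constant $M'$ independent of $r, z$, and Corollary \ref{Ramsey} bounds $|\bar{\mathcal G}_r(z)| \le R((M+1)^2, M'+1, \dots, M'+1)$, again independent of $r, z$.

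The main obstacle — and the place where I would be most careful — is verifying that the pairwise-separated points $p_{(\alpha,\beta)}$ on a monochromatic clique genuinely lie in a bounded-radius ball about $z$. The subtlety is that $(f_\alpha(K) - f_\beta(K)) \cap B_r(z) \neq \emptyset$ only tells us that \emph{some} point $f_\alpha(u) - f_\beta(v)$ with $u, v \in K$ is within $r$ of $z$; to pass to the specific test points $x_i, x_j$ I need $|f_\alpha(u) - f_\alpha(x_i)| = c_\alpha |u - x_i| \le r \cdot \operatorname{diam}(K)$ and likewise for $\beta$, which is fine, giving $|p_{(\alpha,\beta)} - z| \le r(1 + 2\operatorname{diam}(K))$. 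This is routine once spelled out, but it is the step that ties the geometric separation to the combinatorics, so it must be done cleanly. A secondary point is justifying the box-counting bound $\romd_B(K-K) \le 2\romd_A(K)$ that feeds the final estimate; the cleanest route is to note $K - K \subset K \times K$ is a $1$-Lipschitz image, that the weak separation condition for $K$ gives $\romd_A(K) = \romd_B(K)$ by the previous theorem (and $\romd_B(K \times K) \le 2\romd_B(K)$ by submultiplicativity of covering numbers for products), and that box dimension cannot increase under Lipschitz maps.
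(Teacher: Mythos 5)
Your proposal follows essentially the same route as the paper: pass to equivalence classes of pairs $(f_{\alpha},f_{\beta})$ under equality of the difference sets, colour the edges of the resulting complete graph by the index pair $(i,j)$ furnished by the separation condition, bound each monochromatic clique by packing $\epsilon r$-separated points $f_{\alpha}(x_i)-f_{\beta}(x_j)$ inside a ball of radius $O(r)$ about $z$, and invoke Ramsey; the subtlety you single out (passing from ``some point of $f_{\alpha}(K)-f_{\beta}(K)$ lies in $B_r(z)$'' to a bound on $|f_{\alpha}(x_i)-f_{\beta}(x_j)-z|$) is exactly the step the paper carries out, with the same resolution. The one misstep is the claim that each piece $f_{\alpha}(K)-f_{\beta}(K)$ is a \emph{similar copy of $K-K$} scaled by roughly $r$: this fails whenever $c_{\alpha}\neq c_{\beta}$ (or the orthogonal parts differ), which is the typical situation for asymmetric systems, so you cannot cover a piece by rescaling a cover of $K-K$. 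The repair is immediate and is what the paper does: cover $f_{\alpha}(K)$ and $f_{\beta}(K)$ \emph{separately}, each by $N'=N_{K}(r,\rho/2)$ balls of radius $\rho/2$ (each piece sits inside an $r$-ball about a point of $K$), and take pairwise differences to cover $f_{\alpha}(K)-f_{\beta}(K)$ by $(N')^{2}$ balls of radius $\rho$; this yields $N_{K-K}(r,\rho)\leq N\,(N')^{2}\leq NC(r/\rho)^{2d}$ directly from the Assouad homogeneity of $K$, with no need for your detour through $\romd_{B}(K-K)$ or the product set $K\times K$.
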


\begin{proof}
The argument is similar to the argument of the previous section. 
We use a Ramsey theory argument to prove that given any $0 < r<1$ and $z \in K-K$, the cardinality of set of maps $(f_{\aaa}, f_{\bb})$ such that 
\[
	B_{r}(z) \cap \left(f_{\aaa}(K) - f_{\bb}(K)\right) \neq \emptyset
\]
is independent of $r,z$. 

Assume without loss of generality that
$$K - K \subseteq B_{1}(0).$$
Let $d =\romd_{A}(K)$ and let also $r, \rho$ be such that $0<\rho <r<1$.
Now, we fix $z \in K-K$.

Note that for any $x \in K$, we have 
$$ K \subseteq B_{1}(x).$$

We now define the following equivalence relation. For all $\aaa,\bb,\cc,\dd \in I_{r}$ we have
$$(f_{\aaa},f_{\bb}) \sim (f_{\cc}, f_{\dd}) \Leftrightarrow f_{\aaa}(K) - f_{\bb}(K) = f_{\cc}(K) - f_{\dd}(K).$$
For $\aaa, \bb \in I_{r}$ we also define 
$$[(f_{\aaa}, f_{\bb})] = \{(f_{\cc}, f_{\dd}) : \cc ,\dd \in I_{r}, (f_{\cc}, f_{\dd}) \sim (f_{\aaa}, f_{\bb})\},$$
the equivalence class of $(f_{\aaa},f_{\bb})$.
Let
$$H_{r}(z) = \{[(f_{\aaa}, f_{\bb})] : \aaa,\bb \in I_{r}, \, B_{r}(z) \cap \left(f_{\aaa}(K) - f_{\bb}(K)\right) \neq \emptyset\}.$$
We also let $G_{r}(z)$ to be a complete set of representatives from the equivalence classes in $H_{r}(z)$. In particular, we assign a unique element $(f_{\aaa},f_{\bb}) \in G_{r}(z)$ to each equivalence class $[(f_{\aaa},f_{\bb})]$ in $H_{r}(z)$.
Thus, for  $(f_{\aaa},f_{\bb}), (f_{\cc},f_{\dd}) \in G_{r}(z)$, we have
\begin{equation}\label{property of grz}
f_{\aaa}(K) - f_{\bb}(K) \neq f_{\cc}(K) - f_{\dd}(K).
\end{equation}

We now observe by properties of $K$ (see Proposition \ref{properties of K}) that
\begin{align*}
B_{r}(z) \cap (K-K) & \subseteq B_{r}(z) \cap \left(\bigcup_{(\aaa, \bb) \in I_{r} \times I_{r}} f_{\aaa}(K) - f_{\bb}(K)\right) \\
& = \bigcup_{(f_{\aaa}, f_{\bb}) \in G_{r}(z)} B_{r}(z) \cap (f_{\aaa}(K) - f_{\bb}(K))\\
& \subseteq \bigcup_{ (f_{\aaa}, f_{\bb}) \in G_{r}(z)} B_{r}(z) \cap (B_{r} (f_{\aaa}(x))\cap K - B_{r}(f_{\bb}(y)) \cap K)\\
& = \bigcup_{(f_{\aaa}, f_{\bb}) \in G_{r}(z)} B_{r}(z) \cap B_{2r}(f_{\aaa}(x) - f_{\bb}(y)),
\end{align*}
for all $x, y \in K$.

We now claim that we can bound the cardinality of $G_{r}(z)$ independently of $r,z$. Indeed, by \eqref{property of grz}, using the weak separation property, we can find $\{x_{j}\}_{j=0}^{M} \subset K$ such that for each choice of $(f_{\aaa}, f_{\bb}), (f_{\cc}, f_{\dd}) \in G_{r}(z)$, we can find $i, j \leq M$ such that 
\begin{equation}\label{CFG1}
|f_{\alpha}(x_{i}) -f_{\beta}(x_{j}) - f_{\gamma}(x_{i}) + f_{\delta}(x_{j})| \geq \epsilon r.
\end{equation}
Based on the above, we interpret $G_{r}(z)$ as a graph and we say that an edge $\{(f_{\aaa}, f_{\bb}), (f_{\cc}, f_{\dd})\}$ is assigned a colour $(i,j)$, if
\begin{equation}\label{CFG}
|f_{\alpha}(x_{i}) -f_{\beta}(x_{j}) - f_{\gamma}(x_{i}) + f_{\delta}(x_{j})| \geq \epsilon r >0.
\end{equation}

We claim that there exists $N$ independent of $r,z$ such that 
\[|G_{r}(z)| \leq N.\]
Let $T_{ij}$ be any complete monochromatic subgraph of $G_{r}(z)$ of color $(i,j)$. Therefore, for all $(f_{\aaa}, f_{\bb}), (f_{\cc}, f_{\dd})\in T_{ij}$,  \eqref{CFG} is satisfied for the same $x_{i}, x_{j}.$
In particular for each $(f_{\aaa}, f_{\bb}), (f_{\cc}, f_{\dd}) \in T_{ij}$ we have
\begin{equation}\label{CFG2}
f_{\aaa}(x_{i}) - f_{\bb}(x_{j}) \neq f_{\cc}(x_{i}) - f_{\dd}(x_{j}).
\end{equation}
Hence, the number of vertices in $T_{ij}$ equals the number of points $\{f_{\aaa}(x_{i}) - f_{\bb}(x_{j}) : (f_{\aaa}, f_{\bb}) \in T_{ij}\}$.
For $(f_{\aaa} , f_{\bb}) \in T_{ij} \subset G_{r}(z)$, we also have
\begin{align*}
B_{r}(z) \cap (f_{\aaa}(F) - f_{\bb}(F)) \neq \emptyset & \Rightarrow B_{r}(z) \cap (B_{r}(f_{\aaa}(x_{i})) - B_{r}(f_{\bb}(x_{j}))) \neq \emptyset\\
& \Leftrightarrow B_{r}(z) \cap (B_{2r}(f_{\aaa}(x_{i}) - f_{\bb}(x_{j}))) \neq \emptyset.
\end{align*}
Therefore, we deduce that

\[|f_{\aaa}(x_{i}) - f_{\bb}(x_{j}) - z| \leq 3r,\]
and we also know that 
\[|f_{\alpha}(x_{i}) -f_{\beta}(x_{j}) - f_{\gamma}(x_{i}) + f_{\delta}(x_{j})| \geq \epsilon r.\]

Therefore, all the balls of radius $ \epsilon r /2$ and centres $f_{\aaa}(x_{i}) - f_{\bb}(x_{j})$, for $(f_{\aaa}, f_{\bb}) \in T_{ij}$ are disjoint and all the centres lie in a ball of radius $3r$ around $z$. 
It is immediate from \eqref{CFG2} that
\[|T_{j}| \leq \frac{\mu(B_{3r + \epsilon r}(z))}{\mu(B_{\frac{\epsilon r}{2}}(f_{\alpha}(x_{i}) -f_{\beta}(x_{j})))} \leq N_{1}, \]
independent of $r,z,(i,j)$.
Hence, by Ramsey's Theorem, we have that
$$|G_{r}(z)| \leq N,$$
independent of $r,z$.

Now, we enumerate $G_{r}(z)$ using the following parametrisation
$$G_{r}(z) = \{(f_{\aaa_{k}}, f_{\bb_{k}})\}_{k=1}^{N}.$$
Take any $x \in K$. Then, we have
\begin{align*}
B_{r}(z)\cap(K-K) & \subseteq \bigcup_{k=1}^{N} B_{r}(z) \cap (f_{\aaa_{k}}(K) - f_{\bb_{k}}(K)) \\
& \subseteq \bigcup_{k=1}^{N} B_{r}(z) \cap (B_{r}(f_{\aaa_{k}}(x)) \cap K - B_{r}(f_{\bb_{k}}(x)) \cap K).
\end{align*}
Since $f_{\aaa_{k}}(x), f_{\bb_{k}}(x) \in K,$ we can cover each of these balls centred at those points by $N'= N_{K}(r, \rho/2)$ balls of radius $\rho/2$ centred at $K$. Let the centres of those balls be $z^{k}_{i}$.
Then,
\begin{align*}
B_{r}(z)\cap(K-K) & \subseteq \bigcup_{k=1}^{N} \left( \bigcup_{i=1}^{N'} B_{\frac{\rho}{2}} (z^{k}_{i}) - \bigcup_{j=1}^{N'} B_{\frac{\rho}{2}} (z^{k}_{j})\right)\\
& \subseteq  \bigcup_{k=1}^{N} \bigcup_{i, j=1}^{N'} B_{\rho}(z^{k}_{i} - z^{k}_{j})
\end{align*}
Thus,
\[
	N_{K-K}(r,\rho) \leq N (N')^{2} \leq N C \left(\frac{r}{\rho}\right)^{2d}.\qedhere
\]
\end{proof}

\section{Differences of Cantor sets}\label{cantor sets}

Cantor sets are one of the most common examples of self-similar fractals. They are constructed by an iterated process of removing intervals from the unit interval $[0,1]$. We first focus on symmetric Cantor sets, where at each stage of the iteration the intervals that remain are of the same length. We will show that symmetric Cantor sets and a particular class of asymmetric Cantors sets satisfy the weak separation condition for differences. In particular, the Assouad dimension of differences of these Cantor sets obeys bounds in terms of the Assouad dimension of the Cantor set itself.

\subsection{Symmetric Cantor sets}

Symmetric Cantor sets are constructed by removing intervals of proportionate length from $[0,1]$ repeatedly.
In particular, let $\lambda <1/2$ and suppose that $C_{0}$ is the interval $[0,1]$. We define $C_{k+1}$ by removing intervals of length $c_{k} \lambda$, from $C_{k}$, where $c_{k}$ is the length of the intervals in $C_{k}$ (see also Figure \ref{Cantor}). Then, the middle-$\lambda$ Cantor set is defined as 

$$C = \bigcap_{k=0}^{\infty} C_{k}.$$ 

\begin{figure}[!ht]
   \includegraphics[scale=0.7]{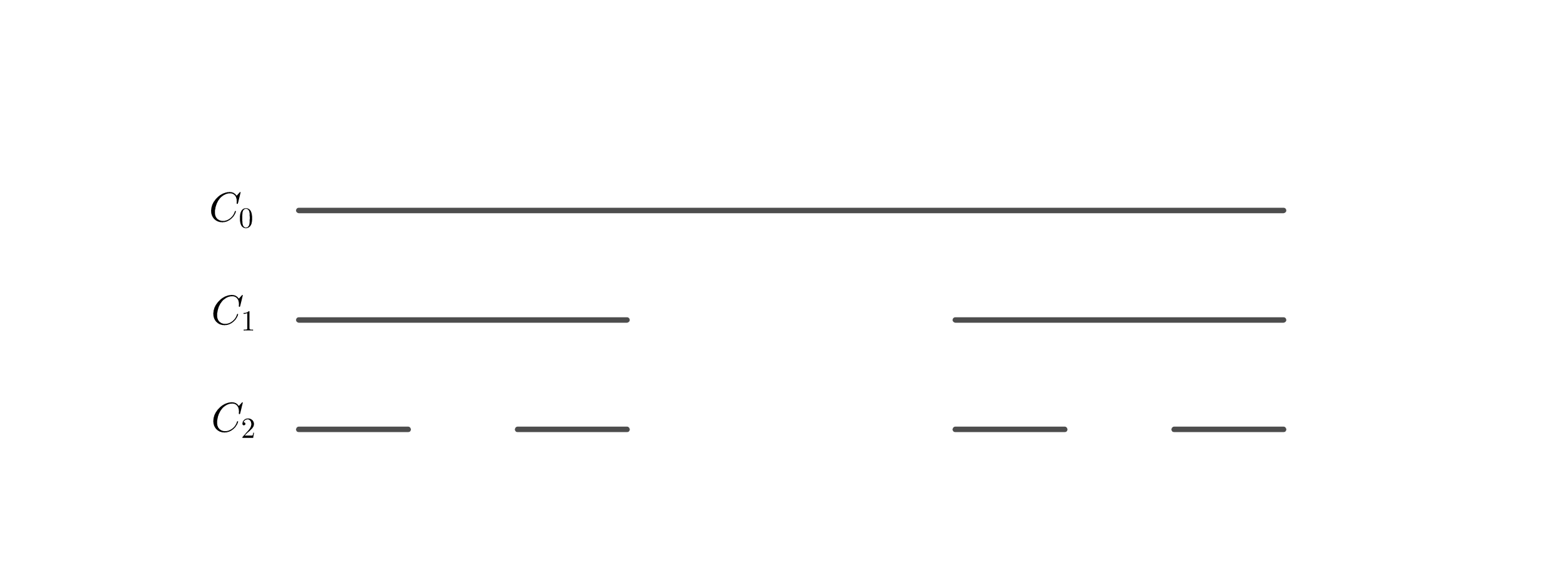}
   \caption{The first stages of the iteration for the middle--$1/3$ Cantor set.}
   \label{Cantor}
\end{figure}

A symmetric Cantor set can also be defined as the attractor of an Iterated Function system. For any $\lambda <1/2$, the middle-$\lambda$ Cantor set $\mathcal{C}_{\lambda}$, is the attractor of the iterated function system that consists of
\[
	f_{1}(x) = \lambda x \qquad \text{and} \qquad f_{2}(x) = \frac{1}{c} x + (1 - \lambda).
\]
We recall the open set condition, which holds if there exists an open set $U$ such that
\[
	U \supseteq \bigcup_{i=1}^{|I|} f_{i}(U) \qquad \text{and} \qquad f_{i}(U) \cap f_{j}(U) = \emptyset.
\]

It is easy to see (see Chapter 13 in the book of  Falconer \cite{FA}) that the Cantor set satisfies the open set condition for $U = (0,1)$ which in particular implies (see again the book of Falconer \cite{FA}) that 
\[
	\romd_{A}(C_{\lambda}) = \romd_{B}(C_{\lambda}) = \romd_{sim}(C_{\lambda}) = \frac{\log 2}{\log \frac{1}{\lambda}}.
\]
Henderson \cite{Henderson} studied the Assouad dimension of the set of differences $C_{\lambda} - C_{\lambda}$ and showed that it is strictly bounded above by twice the Assouad dimension of $C_{\lambda}.$ In particular, $C_{\lambda} - C_{\lambda}$ is an attractor of another system of similarities, which satisfies the weak separation property. We directly show that the Cantor set $C_{\lambda}$, for $\lambda < 1/3$  satisfies the weak separation property for differences, which immediately gives an example of a set that satisfies that property and 
\[
	\romd_{A}(C_{\lambda} - C_{\lambda}) < 2 \romd_{A}(C_{\lambda}).
\]
Note that the restriction on $\lambda$ is plausible since otherwise
\[
	2 \romd_{A}(C_{\lambda}) = 2 \frac{\log 2}{\log \frac{1}{\lambda}} > 1 = \romd_{A}((-1,1)) \geq \romd_{A}(C_{\lambda} - C_{\lambda}).
\]

\begin{proposition}\label{symmetric cantor sets proof}
The Cantor set $C_{\lambda}$, for $\lambda < \frac{1}{3}$ satisfies the weak separation property for differences.
\end{proposition}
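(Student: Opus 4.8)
The plan is to invoke Lemma~\ref{sufficient condition for weak separation}: it is enough to produce a $\zeta>0$ so that, at every scale $b$, any two of the difference sets $f_\aaa(K)-f_\bb(K)$ with $\aaa,\bb\in I_b$ are either equal or at Hausdorff distance at least $\zeta b$. The simplification in the symmetric case is that both maps have the common ratio $\lambda$, so for a given $0<b<1$ the index set $I_b$ consists of all words of a single length $k=k(b)$, the unique integer with $\lambda^{k}\le b<\lambda^{k-1}$; and each such $f_\aaa$ has the explicit form $f_\aaa(x)=\lambda^{k}x+t_\aaa$, where $t_\aaa=(1-\lambda)\sum_{j=1}^{k}\lambda^{j-1}a_j$ with digits $a_j\in\{0,1\}$ recording the letters of $\aaa$ (equivalently, $t_\aaa$ is the left endpoint of the level-$k$ interval coded by $\aaa$).

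First I would record that $f_\aaa(K)-f_\bb(K)=\lambda^{k}(K-K)+(t_\aaa-t_\bb)$, i.e.\ every difference set at scale $b$ is a translate of the one fixed compact set $\lambda^{k}(K-K)$. Since a nonempty bounded set coincides with a translate of itself only under the zero translation, this yields the exact criterion that $f_\aaa(K)-f_\bb(K)=f_\cc(K)-f_\dd(K)$ if and only if $t_\aaa-t_\bb=t_\cc-t_\dd$. Moreover, for translates of any bounded set one has $d_{H}(A,A+v)\ge|v|$, because the point of $A$ that is extreme in the direction of $v$ is carried to distance $|v|$ from $A$. Hence it remains only to bound $|t_\aaa-t_\bb-t_\cc+t_\dd|$ from below whenever it is nonzero.

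That lower bound is the heart of the matter. Writing $t_\aaa-t_\bb-t_\cc+t_\dd=(1-\lambda)\sum_{j=1}^{k}\lambda^{j-1}m_j$, each coefficient $m_j$ is an alternating sum of four $\{0,1\}$-digits and so lies in $\{-2,-1,0,1,2\}$. Assuming the $m_j$ are not all zero, let $\ell$ be the least index with $m_\ell\neq0$; factoring out $\lambda^{\ell-1}$ and bounding the tail by a geometric series gives $\sum_{j>\ell}\lambda^{\,j-\ell}|m_j|\le 2\lambda/(1-\lambda)$, which is strictly less than $1\le|m_\ell|$ exactly because $\lambda<1/3$. Therefore the surviving factor has absolute value at least $1-2\lambda/(1-\lambda)=(1-3\lambda)/(1-\lambda)$, and so $|t_\aaa-t_\bb-t_\cc+t_\dd|\ge(1-3\lambda)\lambda^{\ell-1}\ge(1-3\lambda)\lambda^{k-1}$. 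Since $\aaa\in I_b$ forces $b<\lambda^{k-1}$, the right-hand side exceeds $(1-3\lambda)b$, so $\zeta=1-3\lambda$ works. (Equivalently one may verify the weak separation condition for differences directly, with $\epsilon=1-3\lambda$ and a single point $x_0\in K$: the quantity $f_\aaa(x_0)-f_\bb(x_0)-f_\cc(x_0)+f_\dd(x_0)$ equals $t_\aaa-t_\bb-t_\cc+t_\dd$ because the $\lambda^{k}x_0$ terms cancel in pairs, so the passage through Lemma~\ref{sufficient condition for weak separation} is not essential.)

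I expect the only delicate point to be the geometric-series estimate, and specifically the observation that the hypothesis $\lambda<1/3$ is precisely what forces $2\lambda/(1-\lambda)<1$; everything else is bookkeeping with the explicit affine form of $f_\aaa$. For the asymmetric Cantor sets treated next, this is presumably where the real difficulty will appear, since there the words in $I_b$ need not all have the same length and the maps $f_\aaa$ no longer share a contraction ratio.
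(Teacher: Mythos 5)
Your argument is correct, and it arrives at the same key object as the paper --- the quantity $(1-\lambda)\sum_i a_i\lambda^i$ with $a_i\in\{-2,-1,0,1,2\}$ --- but it treats both decisive steps differently. For the dichotomy, the paper shows only the implication ``unequal difference sets $\Rightarrow$ the sum is nonzero'', by choosing points that realize the Hausdorff distance; you instead note that every $f_{\aaa}(K)-f_{\bb}(K)$ with $\aaa,\bb\in I_{b}$ is a translate of the single compact set $\lambda^{k}(K-K)$, so equality of difference sets is \emph{equivalent} to $t_{\aaa}-t_{\bb}=t_{\cc}-t_{\dd}$. This is cleaner, and your parenthetical is right that it lets you verify the weak separation condition for differences directly with one point $x_{0}$ (the linear parts $\lambda^{k}x_{0}$ cancel), bypassing Lemma~\ref{sufficient condition for weak separation} entirely. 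For the quantitative lower bound, the paper runs a digit-carrying (borrowing) argument, rewriting the sum so that all coefficients share a sign with each nonzero one at least $\tfrac{1}{\lambda}-3$, and then retains a single term; you isolate the lowest-order nonzero coefficient and dominate the tail by the geometric series $2\lambda/(1-\lambda)<1$, which is where $\lambda<1/3$ enters in both proofs, yielding the comparable constant $1-3\lambda$ in place of the paper's $(1-\lambda)\bigl(\tfrac{1}{\lambda}-3\bigr)$. Your route is shorter and more transparent for the symmetric case; the payoff of the paper's heavier carrying construction is that it transfers almost verbatim to the asymmetric Cantor sets of the next subsection, where the words in $I_{b}$ have varying lengths, two scales $c^{p_{1}}$ and $c^{p_{2}}$ interact, and a single ``leading term'' is not readily identified --- precisely the obstacle you flag in your closing remark.
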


\begin{proof}
Fix $\lambda < 1/3$. Then, $C_{\lambda}$ is the attractor of the Iterated Functions system
\[
	f_{1}(x) = \lambda x \qquad \text{and} \qquad f_{2}(x) = \lambda x + (1 - \lambda).
\]
Take any $0<b<1$. We claim that there exists a $\delta >0$, such that for any $\aaa, \bb, \cc, \dd \in I_{b}$ we have
\[
	f_{\aaa}(C_{\lambda}) - f_{\bb}(C_{\lambda}) = f_{\cc}(C_{\lambda}) - f_{\dd}(C_{\lambda}),
\]
or 
\[
|f_{\aaa}(x) - f_{\bb}(y) - f_{\cc}(x) + f_{\dd}(y)| \geq \delta b,
\]
for every $x,y \in C_{\lambda}$.
In particular, this obviously implies the weak separation for differences by choosing any single point in the Cantor set.

Now, we fix $0<b<1$. 
Then, it is easy to see for any $\aaa, \bb \in I_{b}$ with $\aaa=(i_{1},\cdots,i_{k}),  \, \bb = (i_{1},\cdots,i_{m})$, we have that $k=m$ and
\begin{equation}\label{coefficients}
	c_{\aaa} = c_{\bb} = \lambda^{k} \leq  b \leq \lambda^{k-1} = c_{\bar{\aaa}} = c_{\bar{\bb}}.
\end{equation}
We also have that for any $\aaa=(i_{1},\cdots,i_{k}) \in I_{b}$, there exists some translation $q_{\aaa}$ such that for any $x \in C_{\lambda}$
\[
	f_{\aaa}(x) = \lambda^{k} x + q_{\aaa}
\]
and 
$$q_{\aaa} = \sum_{i=0}^{k-1} t_{i} \, \lambda^{i} (1-\lambda)= (1-\lambda) \sum_{i=0}^{k-1} t_{i} \lambda^{i},$$
for $t_{i} \in \{0,1\}.$ 

Therefore, for any $\aaa, \bb, \cc, \dd \in I_{b}$, there exists some $k \in \mathbb{N}$ such that $|\aaa| = |\bb| = |\cc| = |\dd| = k$ and for any $x,y \in C_{\lambda}$
\[
	|f_{\aaa}(x) - f_{\bb}(y) - f_{\cc}(x) + f_{\dd}(y)| = (1-\lambda) \left|\sum_{i=0}^{k-1} a_{i} \lambda^{i} \right|,
\]
where $a_{i} \in \{-1,-2,0,1,2\}.$

Suppose now that 
\[
	f_{\aaa}(C_{\lambda}) - f_{\bb}(C_{\lambda}) \neq f_{\cc}(C_{\lambda}) - f_{\dd}(C_{\lambda}).
\]
We claim that $\left|\sum_{i=0}^{k-1} a_{i} \lambda^{i}\right| \neq 0$. 

Indeed, suppose without loss of generality that 
\[
	d_{H}(f_{\aaa}(C_{\lambda}) - f_{\bb}(C_{\lambda}), f_{\cc}(C_{\lambda}) - f_{\dd}(C_{\lambda})) = \dist(f_{\aaa}(C_{\lambda}) - f_{\bb}(C_{\lambda}), f_{\cc}(C_{\lambda}) - f_{\dd}(C_{\lambda}))
\]
and let $x_{0}, y_{0} \in C_{\lambda}$ be such that
\[
	d_{H}(f_{\aaa}(C_{\lambda}) - f_{\bb}(C_{\lambda}), f_{\cc}(C_{\lambda}) - f_{\dd}(C_{\lambda})) = \dist(f_{\aaa}(x_{0}) - f_{\bb}(y_{0}), f_{\cc}(K) - f_{\dd}(K)) > 0.
\]
Then,
\begin{align*}
	\left|\sum_{i=0}^{k-1} a_{i} \lambda^{i}\right| & = \frac{1}{(1-\lambda)}|f_{\aaa}(x_{0}) - f_{\bb}(y_{0}) -f_{\cc}(x_{0}) + f_{\dd}(y_{0})| \\
	& \geq  \frac{1}{(1-\lambda)} \dist(f_{\aaa}(x_{0}) - f_{\bb}(y_{0}), f_{\cc}(K) - f_{\dd}(K))\\
    & = \frac{1}{(1-\lambda)}  d_{H}(f_{\aaa}(C_{\lambda}) - f_{\bb}(C_{\lambda}), f_{\cc}(C_{\lambda}) - f_{\dd}(C_{\lambda})) > 0.
\end{align*}

Suppose that $\sum_{i=0}^{k-1} a_{i} \lambda^{i} > 0$. We claim that there exist $\widehat{a_{i}} > 0$ such that
\[
	\sum_{i=0}^{k-1} a_{i} \lambda^{i} = \sum_{i=0}^{k-1} \widehat{a_{i}} \lambda^{i}.
\]
We construct $\widehat{a_{i}}$ by the following process.
If $a_{k-1} \geq 0$, we set $\widehat{a_{k-1}} = a_{k-1}.$
If $a_{k-1} < 0$, we write 
$$a_{k-1} =\left(\frac{1}{\lambda} + a_{k-1}\right)\lambda^{(k-1)} - \lambda^{(k-2)}$$ and we set 
$$\widehat{a_{k-1}} = \left(\frac{1}{\lambda}+ a_{k-1}\right).$$
Then, $\widehat{a_{k-1}} > 0$, since $\lambda < \frac{1}{3}$.

Now, if $a_{k-2}\lambda^{(k-2)} - \lambda^{(k-2)} = (a_{k-2} - 1) \lambda^{k-2} < 0$, then we again write   
$$(a_{k-2} -1) \lambda^{(k-2)}  = \left(\frac{1}{\lambda} + a_{k-2} -1\right) \lambda^{(k-2)} - \lambda^{(k-3)}$$
and we set 
$$\widehat{a_{(k-2)}} = \frac{1}{\lambda} + a_{k-2} -1.$$
Then, $\frac{1}{\lambda} + a_{k-2} -1 > 0$, since $\lambda <1/3$ and we carry on this procedure until we construct $\widehat{a_{0}}$. Now, for all $1 \leq i \leq k-1$, we have that
\[
	\widehat{a_{i}} = a_{i} \, \, \, \text{or} \, \, \, (\frac{1}{\lambda} + a_{i}) \, \, \, \text{or} \, \, \, (\frac{1}{\lambda} + a_{i} -1),
\]
which are all non negative. We claim that 
\[
	A= \sum_{i=1}^{k-1} \widehat{a_{i}} \lambda^{i} < 1.
\]
We observe that for all $1 \leq i \leq k-1$, $\widehat{a_{i}} \leq 2$. 
Therefore
\[
	A \leq 2 \sum_{i=1}^{k-1} \lambda^{i} < 2 \sum_{i=1}^{k-1} \left(\frac{1}{3}\right)^{i}  \leq 2 (\frac{3}{2} - 1)  = 1.
\]
Hence, $\widehat{a_{0}} > -1$. If $\widehat{a_{0}} < 0$, then 
\[
	\widehat{a_{0}} \leq -1,
\] 
a contradiction.

By a symmetric argument, i.e. by subtracting $c$ where necessary we have that if the sum is negative then it can be written such that all the coefficients are non--positive. Assume that $\widehat{a_{i}}  \geq 0$, for all $i$. In particular, by the construction above, we observe that if $\widehat{a_{i}}>0$, then
\begin{equation}\label{positive coefficients}
	\widehat{a_{i}} \geq \left(\frac{1}{\lambda} - 3\right) > 0.
\end{equation}
Let $0 \leq m \leq k-1$ such that $\widehat{a_{m}} >0$. Then, by \eqref{coefficients}, \eqref{positive coefficients}, we have
\begin{align*}
	|f_{\aaa}(x) - f_{\bb}(y) - f_{\cc}(x) + f_{\dd}(y)| & = (1-\lambda) \left|\sum_{i=0}^{k-1} a_{i} \lambda^{i} \right| = (1-\lambda) \sum_{i=0}^{k-1} \widehat{a_{i}} \lambda^{i}\\
	& \geq (1-\lambda) a_{m} \lambda^{m}  \geq (1-\lambda)\left(\frac{1}{\lambda}-3\right) \lambda^{k-1} \\
	& \geq (1-\lambda)\left(\frac{1}{\lambda}-3\right) b,
\end{align*}
which concludes the proof.
\end{proof}

We note that we can actually make all the coefficients in the above construction either negative or positive, depending on whether the sum is negative or positive. We will need this in the following section, where we prove that a particular class of Asymmetric Cantor sets satisfies the weak separation condition for differences. 

\subsection{Asymmetric Cantor sets}\label{non symmetric cantor sets}
Asymmetric Cantor sets are constructed by an iterative process of removing intervals of different lengths from the unit interval. In particular let $c_{1}, c_{2} \in (0,1)$ such that $c_{1} + c_{2} <1$. Suppose that $C_{0} = [0,1]$. We construct $C_{1}$ by removing an interval of length $1 - c_{1} - c_{2}$ from $C_{0}$ and we set $C_{1}$ to be the remaining two intervals. We carry on by removing intervals of proportionate length from each of the intervals in $C_{k}$ (see also Figure \ref{Cantorset}). 

\begin{figure}[!ht]
   \includegraphics[scale=0.7]{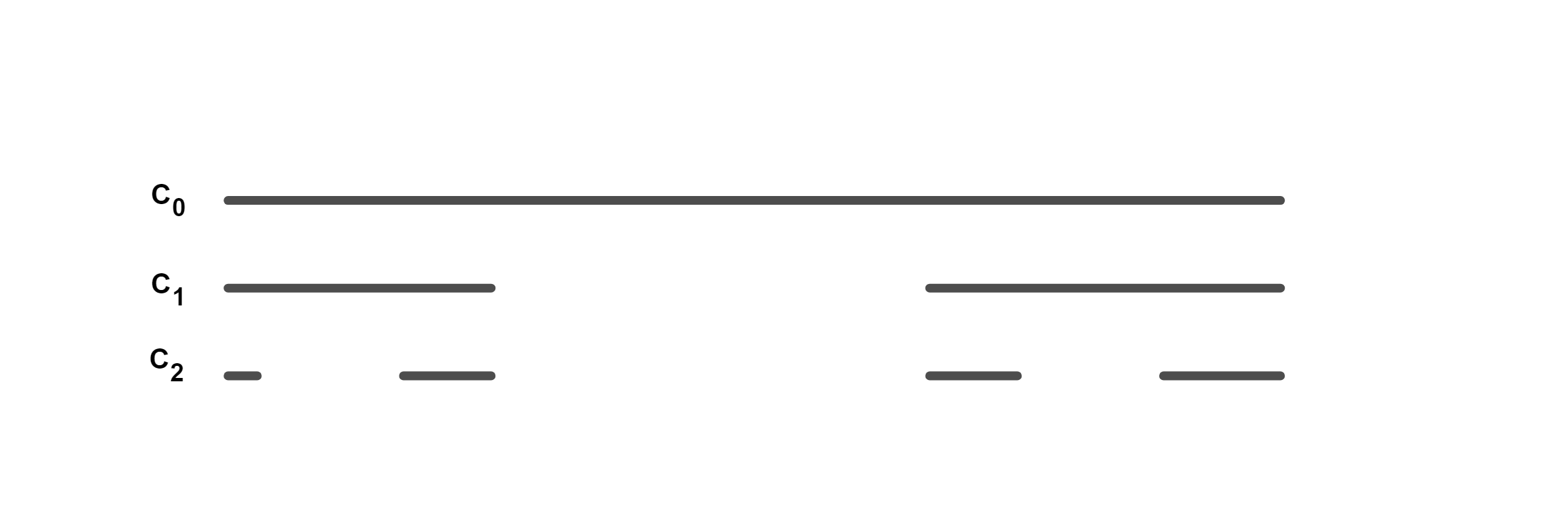}
   \caption{The first stages of the iteration for the asymmetric Cantor set.}
   \label{Cantorset}
\end{figure}

Asymmetric Cantor sets are also the attractors of the following Iterated Functions system
\[
	f_{1}(x) = c_{1} x \qquad \text{and} \qquad f_{2}(x) = c_{2} x +(1-c_{2}),
\] 
for $c_{1}, c_{2} \in (0,1)$, such that $c_{1} + c_{2} <1$. We denote the Cantor set by $K_{c_{1}c_{2}}$. It has been proven by Henderson that if $\frac{\log c_{1}}{\log c_{2}}$ is an irrational number, then $\romd_{A}(K_{c_{1} c_{2}} - K_{c_{1}c_{2}})=1$, which is maximal for this set.

It is an open question whether we can show that the Assouad dimension of $K-K$ is bounded by twice the Assouad dimension of $K$ when $\frac{\log c_{1}}{\log c_{2}}$ is any rational number. 

In this section, we show that if $\frac{\log c_{1}}{\log c_{2}}$ is a rational number and $c_{1} < c_{2} < \frac{1}{4}$, then the weak separation for differences is satisfied. In particular, we prove the following theorem, which is the main result of this section. 

\begin{theorem}
Suppose $c_{1} < c_{2} < 1/4$ such that $\frac{\log c_{1}}{\log c_{2}}$ is rational. Suppose that $K_{c_{1} c_{2}}$ is the attractor of the system $\mathcal{F} = \{f_{1}, f_{2}\}$ such that 
\[
	f_{1}(x) = c_{1} x \qquad \text{and} \qquad f_{2}(x) = c_{2} x + (1 - c_{2}).
\]
Then,
\[
	\romd_{A}(K_{c_{1}c_{2}} - K_{c_{1}c_{2}}) \leq 2 \romd_{A}(K_{c_{1}c_{2}}) = 2 \romd_{\mathrm{sim}}(K_{c_{1}c_{2}}).
\] 
\end{theorem}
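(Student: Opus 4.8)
The plan is to verify that the system $\mathcal{F}=\{f_{1},f_{2}\}$ satisfies the weak separation condition for differences; the inequality $\romd_{A}(K_{c_{1}c_{2}}-K_{c_{1}c_{2}})\le 2\romd_{A}(K_{c_{1}c_{2}})$ then follows from Theorem~\ref{Differences}, while $\romd_{A}(K_{c_{1}c_{2}})=\romd_{\mathrm{sim}}(K_{c_{1}c_{2}})$ holds because the system satisfies the open set condition with $U=(0,1)$. So everything comes down to producing a fixed finite set $\{x_{j}\}\subset K_{c_{1}c_{2}}$ and an $\epsilon>0$ realising the separation, and the argument runs in parallel with Proposition~\ref{symmetric cantor sets proof}; the one genuinely new feature is that words of a given scale need no longer share the same contraction ratio.

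First I would pass to a common base. Write $\log c_{1}/\log c_{2}=p/q$ in lowest terms; since $c_{1}<c_{2}<1$ we have $p>q\ge 1$. Put $\rho:=c_{2}^{1/q}=c_{1}^{1/p}\in(0,1)$, so that $c_{1}=\rho^{p}$ and $c_{2}=\rho^{q}$, and for a word $\alpha$ let $e(\alpha)\in\mathbb{Z}_{\ge 0}$ be defined by $c_{\alpha}=\rho^{e(\alpha)}$ (explicitly $e(\alpha)=p\cdot\#\{1\text{'s in }\alpha\}+q\cdot\#\{2\text{'s in }\alpha\}$). Since membership of $I_{b}$ depends only on which powers of $\rho$ are $\le b$, one has $I_{b}=I_{\rho^{n}}=\{\alpha:\ e(\alpha)\ge n>e(\bar\alpha)\}$ for every $b\in[\rho^{n},\rho^{n-1})$, so it suffices to treat $b=\rho^{n}$; in particular every $\alpha\in I_{\rho^{n}}$ satisfies $n\le e(\alpha)\le n+p-1$ and $e(\bar\alpha)\le n-1$. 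A direct computation gives $f_{\alpha}(x)=\rho^{e(\alpha)}x+q_{\alpha}$ with $q_{\alpha}=(1-c_{2})\sum_{m\in M_{\alpha}}\rho^{m}$, where $M_{\alpha}\subseteq\{0,1,\dots,e(\bar\alpha)\}\subseteq\{0,\dots,n-1\}$ is the set of prefix--exponents immediately preceding a letter $2$, and the elements of $M_{\alpha}$ are spaced by at least $q$. I would take $x_{0}=0$ and $x_{1}=1$ as the distinguished points; both lie in $K_{c_{1}c_{2}}$, being the fixed points of $f_{1}$ and $f_{2}$.

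Fix $\alpha,\beta,\gamma,\delta\in I_{\rho^{n}}$ with $f_{\alpha}(K)-f_{\beta}(K)\ne f_{\gamma}(K)-f_{\delta}(K)$, and set $\Delta:=q_{\alpha}-q_{\gamma}-q_{\beta}+q_{\delta}=(1-c_{2})\sum_{m=0}^{n-1}a_{m}\rho^{m}$ with $a_{m}\in\{-2,-1,0,1,2\}$, so that for all $x,y\in K$
\[
f_{\alpha}(x)-f_{\beta}(y)-f_{\gamma}(x)+f_{\delta}(y)=\bigl(\rho^{e(\alpha)}-\rho^{e(\gamma)}\bigr)x-\bigl(\rho^{e(\beta)}-\rho^{e(\delta)}\bigr)y+\Delta .
\]
If $e(\alpha)\ne e(\gamma)$, then, both exponents lying in $[n,n+p-1]$, $|\rho^{e(\alpha)}-\rho^{e(\gamma)}|\ge\rho^{n+p-1}(1-\rho)=\rho^{p-1}(1-\rho)\rho^{n}$; evaluating the display at $y=0$ and $x\in\{0,1\}$ gives an affine function of $x$ with slope of that size, so its maximum modulus over $x\in\{0,1\}$ is at least $\tfrac12\rho^{p-1}(1-\rho)\rho^{n}$. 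The case $e(\beta)\ne e(\delta)$ is symmetric (fix $x=0$, vary $y\in\{0,1\}$). There remains the case $e(\alpha)=e(\gamma)$ and $e(\beta)=e(\delta)$: then $c_{\alpha}=c_{\gamma}$, $c_{\beta}=c_{\delta}$, and the display equals the constant $\Delta$; if $\Delta=0$ then $q_{\alpha}-q_{\beta}=q_{\gamma}-q_{\delta}$, whence $f_{\alpha}(K)-f_{\beta}(K)=(q_{\alpha}-q_{\beta})+c_{\alpha}K-c_{\beta}K=f_{\gamma}(K)-f_{\delta}(K)$, contrary to assumption, so $\Delta\ne 0$ and it remains to bound $|\Delta|$ below by a fixed multiple of $\rho^{n}$.

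This last estimate is the heart of the matter and is exactly the ``carrying'' argument of Proposition~\ref{symmetric cantor sets proof} transplanted to base $\rho$: letting $m_{0}$ be the least index with $a_{m_{0}}\ne 0$, one has $|\Delta|\ge(1-c_{2})\rho^{m_{0}}\bigl(1-2\sum_{j\ge 1}\rho^{j}\bigr)\ge(1-c_{2})\tfrac{1-3\rho}{1-\rho}\rho^{n-1}$, a positive multiple of $\rho^{n}$ as soon as $\rho<1/3$; equivalently one rewrites $\Delta$ with all nonzero coefficients of one sign and uniformly bounded, as in Proposition~\ref{symmetric cantor sets proof} and the remark following it. Collecting the three cases and absorbing the passage from $b=\rho^{n}$ to a general $b\in[\rho^{n},\rho^{n-1})$ into the constant, one obtains $\epsilon>0$, depending only on $c_{1},c_{2}$, such that whenever $f_{\alpha}(K)-f_{\beta}(K)\ne f_{\gamma}(K)-f_{\delta}(K)$ there are $i,j\in\{0,1\}$ with $|f_{\alpha}(x_{i})-f_{\beta}(x_{j})-f_{\gamma}(x_{i})+f_{\delta}(x_{j})|\ge\epsilon b$, which is the weak separation condition for differences, and Theorem~\ref{Differences} then finishes the proof. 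The step I expect to cause trouble is the inequality $\rho<1/3$ used above: it is automatic when $q=1$ (then $\rho=c_{2}<1/4$), but when $q\ge 2$ one only has $\rho=c_{2}^{1/q}>c_{2}$ and the crude geometric estimate degrades, so one must instead exploit that within each $q_{\alpha}$ the exponents of $M_{\alpha}$ are at least $q$ apart (so that, for instance, the coefficients $a_{m}=2$, and likewise $a_{m}=-2$, occur at most once per $q$ consecutive indices) together with a more careful carrying scheme, and it is precisely here that the hypothesis $c_{2}<1/4$ must be used. Everything else is a routine adaptation of arguments already in the paper.
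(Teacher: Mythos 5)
Your overall strategy matches the paper's: verify the weak separation condition for differences using the two points $x_{0}=0$, $x_{1}=1$, invoke Theorem~\ref{Differences}, and get $\romd_{A}(K_{c_{1}c_{2}})=\romd_{\mathrm{sim}}(K_{c_{1}c_{2}})$ from the open set condition (the paper instead deduces this from the separation condition itself, but your route is fine). Your reduction is also sound and in places cleaner than the paper's: passing to the common base $\rho$, splitting on whether $e(\alpha)=e(\gamma)$ and $e(\beta)=e(\delta)$, disposing of the unequal-exponent cases by the affine slope estimate, and reducing everything to a lower bound of the form $|\Delta|\geq\mathrm{const}\cdot\rho^{n}$ for the nonzero translation difference $\Delta=(1-c_{2})\sum_{m=0}^{n-1}a_{m}\rho^{m}$ with $a_{m}\in\{-2,\dots,2\}$.

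However, the step you flag as troublesome is a genuine gap, and it is the step the entire theorem turns on. The single-base carrying argument requires $\rho<1/3$, both to keep the carried coefficients $\rho^{-1}+a_{m}$ positive and to make the tail $2\sum_{j\geq1}\rho^{j}$ less than $1$; but when $q\geq2$ the hypothesis $c_{2}<1/4$ gives no control whatsoever on $\rho=c_{2}^{1/q}$, which can be arbitrarily close to $1$ (e.g.\ $c_{1}=c_{2}^{11/10}$ with $c_{2}$ near $1/4$ gives $\rho=c_{2}^{1/10}\approx0.87$), so the estimate does not merely degrade --- it fails outright. Your proposed repair via the $q$-spacing of the exponents inside a single $M_{\alpha}$ is not enough as stated, because $\Delta$ superposes four words and the combined set of indices with $a_{m}\neq0$ need not be $q$-separated. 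The paper's resolution is structural and is the idea missing from your argument: it never carries in base $\rho$ at all. Instead it records each exponent as $ip_{1}+jp_{2}$ ($i$ the number of $1$'s and $j$ the number of $2$'s in the relevant prefix), organizes $\Delta$ as a double sum $(1-c^{p_{2}})\sum_{i}A_{i}c^{ip_{1}}$ with $A_{i}=\sum_{j}a_{ij}c^{jp_{2}}$, and performs the carrying twice --- once at the inner level in base $c^{p_{2}}=c_{2}$ and once at the outer level in base $c^{p_{1}}=c_{1}$ --- each of which is legitimate precisely because $c_{1}<1/4$ and $c_{2}<1/4$. Without that two-level decomposition (or some substitute exploiting the structure of the coefficients), your proof does not close.
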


Suppose that 
\[
	\frac{\log c_{1}}{\log c_{2}} = \frac{p_{1}}{p_{2}}.
\]
Then, $c_{1}= c_{2}^{p_{1}/p_{2}}.$ Let $c = c_{2}^{1/p_{2}}.$ Then, $c_{1} = c^{p_{1}}$ and $c_{2} = c^{p_{2}}$. Moreover, $c^{p_{1}} < c^{p_{2}} < 1/4$. Thus, it suffices to prove the following result.

\begin{theorem}
Suppose $c \in (0,1)$, $p_{2}<p_{1} \in \mathbb{N}$ such that $c^{p_{1}} < c^{p_{2}} < 1/4$. Let $K=K_{c^{p_{1}}c^{p_{2}}}$ be the attractor of the system $\mathcal{F} = \{f_{1}, f_{2}\}$ where
\[
	f_{1}(x) = c^{p_{1}} x, \qquad \text{and} \qquad f_{2}(x) = c^{p_{2}} x + (1 - c^{p_{2}}).
\]
Then, $K$ satisfies the weak separation condition for differences. In particular, 
$$\romd_{A}(K-K) \leq 2 \romd_{A}(K) \leq 2 \romd_{\mathrm{sim}}(K).$$
\end{theorem}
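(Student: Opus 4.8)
The plan is to verify the weak separation condition for differences and then invoke Theorem~\ref{Differences}. Two harmless reductions come first. Since $\tfrac{\log c_1}{\log c_2}=\tfrac{p_1}{p_2}$ is unaffected by replacing $c$ with $c^{\gcd(p_1,p_2)}$, I may assume $\gcd(p_1,p_2)=1$. And since the fixed points $0=f_1(0)$ and $1=f_2(1)$ both lie in $K$, I will take the collection of points to be $\{x_0,x_1\}=\{0,1\}$ (so $M=1$) and aim to prove: there is $\epsilon>0$ so that for every $b\in(0,1)$ and all $\aaa,\bb,\cc,\dd\in I_b$, either $f_\aaa(K)-f_\bb(K)=f_\cc(K)-f_\dd(K)$, or $|f_\aaa(x_i)-f_\bb(x_j)-f_\cc(x_i)+f_\dd(x_j)|\ge\epsilon b$ for some $i,j\in\{0,1\}$.

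Write $f_\aaa(x)=c^{n(\aaa)}x+q_\aaa$ with $n(\aaa)=\sum_t p_{i_t}$; then $\aaa\in I_b$ forces $m\le n(\aaa)\le m+p_1-1$, where $m=m(b)$ is least with $c^m\le b$ (so $cb<c^m\le b$). The open set condition with $U=(0,1)$ makes the intervals $f_\aaa([0,1])=[q_\aaa,q_\aaa+c^{n(\aaa)}]$, $\aaa\in I_b$, pairwise disjoint, and an elementary argument on the subdivision tree shows any two of them are separated by a gap of length at least $\gamma_0 b$, where $\gamma_0=1-c^{p_1}-c^{p_2}>1/2$; in particular $|q_\aaa-q_\cc|\ge\gamma_0 b$ when $\aaa\ne\cc$ lie in $I_b$. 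I then split into two cases. \emph{Case 1: $n(\aaa)\ne n(\cc)$ (or symmetrically $n(\bb)\ne n(\dd)$).} The map $(x,y)\mapsto f_\aaa(x)-f_\bb(y)-f_\cc(x)+f_\dd(y)$ is affine with $x$-coefficient $c^{n(\aaa)}-c^{n(\cc)}$, whose modulus is at least $c^{m+p_1-2}(1-c)$, hence more than $c^{p_1-1}(1-c)\,b$, since the two exponents are distinct and lie in $[m,m+p_1-1]$; evaluating at $x\in\{0,1\}$, $y=0$ shows two admissible values differ by this much, so one of them has modulus at least half of it. Thus the inequality holds in Case~1 regardless of whether the difference sets agree. \emph{Case 2: $n(\aaa)=n(\cc)=:N_1$ and $n(\bb)=n(\dd)=:N_2$.} Here $f_\aaa-f_\cc$ is the constant $q_\aaa-q_\cc$ and $f_\bb-f_\dd$ the constant $q_\bb-q_\dd$, so $f_\aaa(x)-f_\bb(y)-f_\cc(x)+f_\dd(y)=\Delta:=(q_\aaa-q_\cc)-(q_\bb-q_\dd)$ independently of $x,y$; moreover $f_\aaa(K)-f_\bb(K)$ and $f_\cc(K)-f_\dd(K)$ are the translates of the common bounded set $c^{N_1}K-c^{N_2}K$ by $q_\aaa-q_\bb$ and $q_\cc-q_\dd$, so they coincide precisely when $\Delta=0$. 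If $\aaa=\cc$ or $\bb=\dd$ then $\Delta$ equals $\pm(q_\bb-q_\dd)$ or $\pm(q_\aaa-q_\cc)$, hence is either $0$ or $\ge\gamma_0 b$ in modulus; so it remains to treat $\aaa\ne\cc$, $\bb\ne\dd$ and prove $\Delta\ne0\Rightarrow|\Delta|\ge\epsilon b$.

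For this I pass to base $c$. Writing $q_\aaa=(1-c^{p_2})\sum_{\ell\ge0}\epsilon^\aaa_\ell c^\ell$, the string $(\epsilon^\aaa_\ell)$ is the concatenation, in the order prescribed by $\aaa$, of the block $0^{p_1}$ for each symbol $1$ and the block $1\,0^{p_2-1}$ for each symbol $2$; hence $\epsilon^\aaa_\ell=0$ for $\ell\ge n(\aaa)$, and the $1$'s of $\epsilon^\aaa$ are spaced at least $p_2$ apart. Therefore
\[
\Delta=(1-c^{p_2})\sum_{\ell=0}^{L}a_\ell c^\ell,\qquad a_\ell=\epsilon^\aaa_\ell-\epsilon^\cc_\ell-\epsilon^\bb_\ell+\epsilon^\dd_\ell\in\{-2,-1,0,1,2\},
\]
with $L\le m+p_1-2$, and over any $p_2$ consecutive indices the signed sum of the $a_\ell$ lies in $\{-2,\dots,2\}$. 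The remaining claim is that if $\sum_{\ell=0}^{L}a_\ell c^\ell\ne0$ then $|\sum_{\ell=0}^{L}a_\ell c^\ell|\ge\eta\,c^m$ for a constant $\eta=\eta(c,p_1,p_2)>0$; with $c^m>cb$ this yields $|\Delta|\ge(1-c^{p_2})\eta c\,b$ and finishes the proof. To prove the claim I will adapt the carrying argument of Proposition~\ref{symmetric cantor sets proof} and the remark after it, but carried out on blocks of $p_2$ consecutive digits rather than single digits: assuming (without loss of generality) the sum is positive, process the blocks from the top down, transferring each deficit to the block below, and rewrite the sum as $\sum_\ell\widehat a_\ell c^\ell$ with all $\widehat a_\ell\ge0$ and still supported on $\ell\le m+p_1-2$; once there is no cancellation, $\sum_\ell\widehat a_\ell c^\ell\ge\widehat a_{\ell^*}c^{\ell^*}\ge c^{m+p_1-2}$. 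The hypothesis $c^{p_2}<1/4$ is precisely what makes each transfer leave a non-negative coefficient behind — it plays here the role $\lambda<1/3$ plays in Proposition~\ref{symmetric cantor sets proof}, the improvement from $1/3$ to $1/4$ reflecting that a block now pools the contributions of the four words $\aaa,\bb,\cc,\dd$.

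The reductions, Case~1, and the reduction of Case~2 to the single inequality $\Delta\ne0\Rightarrow|\Delta|\ge\epsilon b$ are routine. The heart of the matter, and the step I expect to be most delicate, is the block-wise carrying in the last paragraph: one must track how a deficit in one block of $p_2$ digits is absorbed by the next block down without overshooting, and check that $c^{p_2}<1/4$ is exactly the margin that keeps every carried coefficient non-negative while confining the final representation to levels $\le m+p_1-2$ (so that the surviving non-zero coefficient sits at height $\gtrsim c^m$) — all in the presence of the non-uniform block positions produced by four different words. Once that combinatorial bookkeeping is in place, the dimension bound is the Ramsey-type counting argument already packaged in Theorem~\ref{Differences}.
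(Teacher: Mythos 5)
Your overall route is the paper's: test the weak separation condition for differences at the points $0,1\in K$, reduce everything to a lower bound of order $b$ on the translation discrepancy $q_{\alpha}-q_{\beta}-q_{\gamma}+q_{\delta}$ (your $\Delta$), and obtain that bound by a carrying/renormalisation of a signed radix-$c$ expansion, with $c^{p_{2}}<1/4$ supplying the room for the carries. Your case split (on whether the contraction ratios agree, rather than on whether the $q$-part vanishes, as in the paper) is a genuine and in fact cleaner variant: in your Case~2 the test expression is the constant $\Delta$ and equality of the difference sets is exactly $\Delta=0$, while your Case~1 trick of comparing the values at $(0,0)$ and $(1,0)$ neatly decouples the ratio part from the translations. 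Those reductions, and the gap estimate $|q_{\alpha}-q_{\gamma}|\geq(1-c^{p_1}-c^{p_2})b$, are all correct.

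The gap is in the block-carrying lemma, which is the heart of the proof and is only sketched; as described, the plan would not work. Two concrete problems. First, the stated goal ``all $\widehat{a_{\ell}}\geq 0$'' at the digit level is unattainable: a single-digit borrow costs $1/c$, and $c$ may be arbitrarily close to $1$ (only $c^{p_{2}}<1/4$ is assumed), so a block containing $+c^{r_{1}}-c^{r_{2}}$ with $r_{1}<r_{2}$ can never be made digitwise non-negative; at best one can make the block sums $A_{j}=\sum_{r<p_{2}}a_{jp_{2}+r}c^{r}$ non-negative. Second, and more seriously, the recursion does not close at the bottom block. In Proposition~\ref{symmetric cantor sets proof} the coefficients are integers, so $\widehat{a_{0}}<0$ forces $\widehat{a_{0}}\leq-1$, which the tail bound contradicts. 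Here the block values are not integers: after the carries one only knows $\widehat{A_{0}}=A_{0}-\kappa$ with $\kappa\in\{0,1\}$, which can be negative but small, while the tail satisfies only $\sum_{j\geq1}\widehat{A_{j}}\,c^{p_{2}j}<1/(1-c^{p_{2}})<4/3$; no contradiction results, and the sum may a priori be positive yet far smaller than the required order $c^{m}$. (Finiteness of the set of possible block values gives $\widehat{A_{0}}\leq-\eta$ for some $\eta>0$ when negative, but nothing forces $\eta$ to exceed the tail.) This is exactly where the paper does extra work: it first renormalises the inner $c^{p_{2}}$-expansion of each block so that every nonzero block value is bounded away from $0$ by the same quantity that bounds the tail from above (estimates \eqref{KASA-1} and \eqref{KASA}), and only then carries at the outer level. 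Relatedly, your final step $\widehat{a_{\ell^{*}}}c^{\ell^{*}}\geq c^{m+p_{1}-2}$ presumes $\widehat{a_{\ell^{*}}}\geq1$, which is unjustified for blocks that never received a borrow. Until the bottom-block issue is resolved, the implication $\Delta\neq0\Rightarrow|\Delta|\geq\epsilon b$ — and with it the theorem — is not established.
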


The proof follows a similar procedure with the one for symmetric Cantor sets, but is significantly more involved.

\begin{proof}

Fix any $0< b < 1.$ Let $\aaa, \bb, \cc, \dd \in I_{b}$. Then, for any $x,y \in K$, we have
\begin{equation}\label{CALPHAS}
	f_{\aaa}(x) - f_{\bb}(y) -f_{\cc}(x) + f_{\dd}(y) = (c_{\aaa} - c_{\cc} ) x + (c_{\dd} - c_{\bb}) y + q_{\aaa} - q_{\bb} - q_{\cc} + q_{\dd},
\end{equation}
for some translations $q_{\aaa}, q_{\bb}, q_{\cc}, q_{\dd}$.
By definition of $I_{b}$, we have that for any $\alpha \in I_{b}$
\begin{equation}\label{coefficients non symmetric}
c_{\aaa} \geq c_{\bar{\aaa}} c^{p_{1}} \geq b c^{p_{1}}
\end{equation}
	
Assume that
\[
	f_{\aaa}(K) - f_{\bb}(K) \neq f_{\cc}(K) - f_{\dd}(K).
\]
First we treat the case that $q_{\aaa} - q_{\bb} - q_{\cc} + q_{\dd} = 0$. By compactness of $K$, let $x_{0}, y_{0} \in K$ such that
\[
	d_{H}(f_{\aaa}(K) - f_{\bb}(K), f_{\cc}(K) - f_{\dd}(K)) = \dist(f_{\aaa}(x_{0}) - f_{\bb}(y_{0}), f_{\cc}(K) - f_{\dd}(K)) >0.
\]
Therefore,
\[
	|f_{\aaa}(x_{0}) - f_{\aaa}(y_{0}) - f_{\cc}(x_{0}) + f_{\dd}(y_{0})| \geq \dist(f_{\aaa}(x_{0}) - f_{\bb}(y_{0}), f_{\cc}(K) - f_{\dd}(K)) >0.
\]
It is immediate from \eqref{CALPHAS} that either $c_{\aaa} \neq c_{\cc}$ or $c_{\bb} \neq c_{\dd}$.
Assume without loss of generality that $c_{\aaa} < c_{\cc}.$
Then, since $0,1 \in K$ we have by \eqref{coefficients non symmetric} 
\[
	|f_{\aaa}(1) - f_{\bb}(0) -f_{\cc}(1) + f_{\dd}(0)| = |c_{\aaa} - c_{\cc}| =  c_{\cc} \left(1 - \frac{c_{\aaa}}{c_{\cc}}\right) \geq b c^{p_{2}} \left(1 - \frac{c_{\aaa}}{c_{\cc}}\right).
\]
We claim that there exists $M >0$ such that 
\[
	\left(1 - \frac{c_{\aaa}}{c_{\cc}}\right) \geq M.
\]
Indeed, let $n_{1}, n_{2}, m_{1}, m_{2}$ be such that 
\[
	c_{\aaa} = c^{p_{1} n_{1} + p_{2} n_{2}} \qquad \text{and} \qquad c_{\cc} = c^{p_{1} m_{1} + p_{2} m_{2}},
\]
with $p_{1} m_{1} + p_{2} m_{2} < p_{1} n_{1} + p_{2} n_{2}.$
Then, 
\[
	\frac{c^{p_{1} n_{1} + p_{2} n_{2}}}{c^{p_{1} m_{1} + p_{2} m_{2}}} \leq \frac{c^{p_{1} m_{1} + p_{2} m_{2} +1}}{c^{p_{1} m_{1} + p_{2} m_{2}}} = c,
\]
which implies that
\[
	\left(1 - \frac{c_{\aaa}}{c_{\cc}}\right) \geq (1-c).
\]
Thus, the weak separation property is satisfied when $q_{\aaa} - q_{\bb} - q_{\cc} + q_{\dd} = 0.$
Suppose now that $q_{\aaa} - q_{\bb} - q_{\cc} + q_{\dd} \neq 0.$
By \eqref{CALPHAS}, we have that
\[
	|f_{\aaa}(0) - f_{\bb}(0) - f_{\cc}(0) + f_{\dd}(0)| = |q_{\aaa} - q_{\bb} - q_{\cc} + q_{\dd}|. 
\]
We claim that there exists $M_{1} >0$ such that
\[
	|q_{\aaa} - q_{\bb} - q_{\cc} + q_{\dd}| \geq M_{1} b.
\]
We want to write $q_{\aaa} - q_{\bb} - q_{\cc} + q_{\dd}$  in terms of powers of $c^{p_{1}}$ and $c^{p_{2}}$.
For $\aaa \in I_{b}$, let $n_{\aaa}, m_{\aaa} \in \mathbb{N}$ be such that
\[
	c_{\aaa} = c^{n_{\aaa} p_{1} + m_{\aaa}p_{2}}.
\]  
Then, we have 
\begin{align*}
	q_{\aaa} & = (1 - c^{p_{2}}) \left(c^{0} \left(\sum_{j=0}^{m_{\aaa}-1} t_{o \,j} c^{p_{2} \,j}\right)+ \cdots + c^{p_{1} (n_{\aaa} - 1)}\left( \sum_{j=1}^{m_{\aaa} -1 } t_{n_{\aaa} - 1 \, j} c^{p_{2} \,j}\right)\right) 
\end{align*}
for some $t_{ij} \in \{0,1\}$, where $0\leq i\leq n_{\aaa}-1$ and $0 \leq j \leq m_{\aaa}-1$.

Assume that $N_{1} = \max \{n_{\aaa}, n_{\bb}, n_{\cc}, n_{\dd}\}$ and $N_{2} = \max \{m_{\aaa}, m_{\bb}, m_{\cc}, m_{\dd}\}$.
Then, 
\begin{align*}
	q_{\aaa} - q_{\bb} - q_{\cc} + q_{\dd} =(1 - c^{p_{2}}) \left(c^{0} \left(\sum_{j=0}^{N_{2}-1} a_{o \,j} c^{p_{2} \,j}\right) + \cdots + c^{p_{1} (N_{1} - 1)}\left( \sum_{j=1}^{N_{2} -1 } a_{(N_{1} - 1) j} c^{p_{2} \,j}\right)\right) 
\end{align*}
where $a_{ij} \in \{-2,-1,0,1,2.\},$ for all $ i \leq N_{1}-1$ and $j \leq N_{2} - 1$.
Let 
\[
	A_{i} = \sum_{j=0}^{N_{2}-1} a_{ij} c^{p_{2}j}.
\]
Since $c^{p_{2}} <1/4 < 1/3$, by the argument in the previous section (see proof of Proposition \ref{symmetric cantor sets proof}), we can rewrite all negative $A_{i}$ such that all the coefficients $a_{ij}$ are negative and all positive $A_{i}$, such that all $a_{ij}$ are positive. 
In this case we also note by the previous argument that if $a_{i j} < 0$, for some $i,j$ then 
$$-2 \leq a_{i j} \leq 3 - \frac{1}{c^{p_{2}}} < -1$$ and if $a_{i j} > 0$, for some $i,j$, then 
$$1 \leq \frac{1}{c^{p_{2}}} - 3 \leq a_{i j} \leq 2.$$
Consequently, if $A_{i} < 0$ then 
\begin{equation}\label{KASA-1}
	-2  \sum_{j=1}^{\infty} c^{p_{2} j} \leq - 2 \sum_{j=1}^{N_{2}-1} c^{p_{2} j}  \leq A_{i} < - \sum_{j=1}^{N_{2}-1} c^{p_{2} j},
\end{equation}
and if $A_{i} >0$, we have
\begin{equation}\label{KASA}
	1 \leq \sum_{j=1}^{N_{2}-1} c^{p_{2} j} \leq A_{i} \leq 2 \sum_{j=1}^{N_{2}-1} c^{p_{2} j} \leq 2 \sum_{j=1}^{\infty} c^{p_{2} j}.
\end{equation}
Assume that $q_{\aaa} - q_{\bb} - q_{\cc} + q_{\dd} > 0$. We want to rewrite the above sum such that all $A_{i}$ are non--negative. If $A_{N_{1}-1} \geq 0$, we set $\widehat{A_{N_{1}-1}} = A_{N_{1}-1}$. If $A_{N_{1}-1} < 0$, we set 
\[
	\widehat{A_{N_{1}-1}} = \left(\frac{1}{c^{p_{1}}} + A_{N_{1}-1}\right) c^{p_{1}(N_{1}-1)} - c^{p_{1}(N_{1}-2)}.
\]
We claim that $\left(\frac{1}{c^{p_{1}}} + A_{N_{1}-1}\right) > 0$.
Indeed, since $A_{N_{1}-1} <0$, we have that 
\begin{equation}\label{KASA1}
	A_{N_{1}-1} \geq -2 \sum_{i=0}^{N_{2}-1} c^{p_{2} i} \geq \frac{-2}{1 - c^{p_{2}}} \geq -3,
\end{equation}
since $c^{p_{2}} < \frac{1}{3}.$ Thus, $\widehat{A_{N_{1}-1}} > 0$, since $c^{p_{1}} < \frac{1}{4}.$ Now, arguing is in the symmetric Cantor set case, if $A_{N_{1}-2} -1 \geq 0$, we set $\widehat{A_{N_{1}-2}} = A_{N_{1}-2} -1$, while if $A_{N_{1}-2} -1 < 0$, we set 
\[
	\widehat{A_{N_{1}-2}} = \left(\frac{1}{c^{p_{1}}} + A_{N_{1}-2} - 1\right),
\]
which is positive since  since $c^{p_{1}} < 1/4$ and $A_{N_{1} - 2} \geq -3$. We then write
\begin{align*}
	A_{N_{1}-2} c^{p_{1} (N_{1}-2)} & = \left(\frac{1}{c^{p_{1}}} + A_{N_{1}-2} - 1\right) c^{p_{1} (N_{1}-2)} - c^{p_{1}(N_{1}-3)}\\& =\widehat{A_{N_{1}-2}}c^{p_{1} (N_{1}-2)}   - c^{p_{1}(N_{1}-3)}.
\end{align*}
 
We continue the process until we have defined $\widehat{A_{0}}$. We note that for all $1 \leq i \leq n-1$, by \eqref{KASA} and \eqref{KASA1}, we have
\[
	\frac{1}{c^{p_{1}}} - 4 \leq \widehat{A_{i}} \leq 2 \sum_{j=1}^{N_{2}-1} c^{p_{2} j}.
\]
Hence,
\begin{align*}
	\sum_{i=1}^{N_{1}-1} \widehat{A_{i}} c^{i p_{1}} \leq  2 B_{j} \sum_{i=1}^{\infty} c^{p_{1} i} ,
\end{align*}
where 
$$B_{j} = \sum_{j=1}^{N_{2}-1} c^{p_{2} j}.$$
Therefore,
\[
	\sum_{i=1}^{N_{1}-1} \widehat{A_{i}} c^{i p_{1}} \leq B_{j} \left(\frac{2 c^{p_{2}}}{1 - c^{p_{2}}}\right) < B_{j},
\]
since $c^{p_{2}} < 1/3$.
Since we have assumed $q_{\aaa} - q_{\bb} - q_{\cc} +q_{\dd} >0$, we need 
\[
	\widehat{A_{0}} > - B_{j} = - \sum_{j=1}^{N_{2}-1} c^{p_{2} j}.
\]
But, from \eqref{KASA-1}, \eqref{KASA}, we deduce that if $\widehat{A_{0}} <0$, then it must satisfy 
\[
		\widehat{A_{0}} \leq - \sum_{j=1}^{N_{2}-1} c^{p_{2} j}.
\]
Consequently, $\widehat{A_{0}} \geq 0$. By a symmetric argument, if $q_{\aaa} - q_{\bb} - q_{\cc} +q_{\dd} <0$, we can rewrite the sum such that all the $A_{i}$ are non--positive, for $0 \leq i \leq N-1$. Assume without loss of generality that $q_{\aaa} - q_{\bb} - q_{\cc} +q_{\dd} > 0$. Then, we have 
\[
	 q_{\aaa} - q_{\bb} - q_{\cc} +q_{\dd} = (1 - c^{p_{2}})\sum_{i=0}^{N_{1}-1} \widehat{A_{i}} c^{i p_{1}}.
\]
By the above construction, we deduce that if $\widehat{A_{i}} > 0$, then $\widehat{A_{i}} \geq \frac{1}{c^{p_{1}}} - 4  > 0,$ for $0 \leq i \leq N_{1}-1$. Moreover, for every $i$, we have that
\[
	\widehat{A_{i}} = \sum_{j=0}^{N_{2}-1} \widehat{a_{ij}} c^{j p_{2}}.
\]
Similarly, if  $\widehat{a_{ij}} >0$, for some $0 \leq j \leq N_{2} - 1$, then $\widehat{a_{ij}} \geq \frac{1}{c^{p_{2}}} - 3 >0$.

Assume that $N_{1} = n_{\aaa} = \max\{n_{\aaa}, n_{\bb}, n_{\cc}, n_{\dd}\}$ and $N_{2} = m_{\bb}.$ Assume without loss of generality that $m_{\bb} \leq n_{\aaa}$. Thus, $m_{\aaa} \leq m_{\bb} \leq n_{\aaa}$. Let $0 \leq m \leq n_{\aaa} - 1$ such that $\widehat{A_{m}} >0$. Let also $0 \leq n \leq m_{\aaa} -1 \leq m_{\bb} -1$ such that $\widehat{a_{mn}} >0$.
Thus,
\begin{align*}
	|q_{\aaa} - q_{\bb} - q_{\cc} + q_{\dd}| & \geq (1 - c^{p_{2}}) \widehat{A_{m}}c^{p_{1}m} \geq (1 - c^{p_{2}}) \widehat{a_{m n}} c^{p_{2} n} c^{p_{1} m}\\
	&  \geq (1 - c^{p_{2}}) \left(\frac{1}{c^{p_{2}}} - 3\right) c^{(m_{\aaa}-1) p_{2}} c^{(n_{\aaa}-1) p_{1}}\\
	& \geq (1 - c^{p_{2}}) \left(\frac{1}{c^{p_{2}}} - 3\right) c_{\bar{\aaa}} \geq (1 - c^{p_{2}}) \left(\frac{1}{c^{p_{2}}} - 3\right) b,
\end{align*}
which concludes the proof that the system satisfies the weak separation condition for differences. Since the weak separation condition for differences trivially implies the standard weak separation condition we deduce that $\romd_{\mathrm{sim}} (K) \geq \romd_{A}(K)$, which implies the desired result by Theorem \ref{Differences}.
\end{proof}

The above theorem can be a useful tool for computing explicit bounds for the Assouad dimension of differences of asymmetric Cantor sets. Let $c \in (0,1)$, $c_{1} = c^{p}, c_{2} = c^{2p}$ such that $c^{2p} < c^{p} < \frac{1}{4}.$ Then, we can explicitly compute the similarity dimension $\romd_{\mathrm{sim}}(A_{c_{1}c_{2}}).$ In particular, let $D$ such that 
\[
	c^{p D} + c^{2 p D} = 1.
\]
By solving the quadratic equation for $c^{p}$, we find that 
\[
	D = \frac{\log \phi}{p \log \left(\frac{1}{c}\right)},
\]
where 
\[
	\phi= \frac{2}{\sqrt{5} -1}.
\]
Thus,
\[
	\romd_{A}(A_{c_{1}c_{2}} - A_{c_{1}c_{2}}) \leq \frac{2 \log \phi}{p \log \left(\frac{1}{c}\right)}.
\]

We note that in the above argument we only require one of the exponents to be less than $1/4$ and the other one to be less than $1/3$. However, for simplicity, we assume that both are less than $1/4$.

\section{Conclusion}
We showed that when a system of contracting similarities satisfies a suitable separation condition, then the attractor of the system possesses a set of differences that obeys non-trivial bounds related to the Assouad dimension of the attractor itself. In particular, we show that particular examples of symmetric and asymmetric Cantor sets fall in the above class. There are a number of questions that arise naturally from these results and we would like to list some of them.

\begin{Q}
In Lemma \ref{sufficient condition for weak separation}, we give a sufficient condition for the weak separation to hold. Is it true that the two conditions are equivalent?
\end{Q}

\begin{Q}
Henderson \cite{Henderson} showed that if $\frac{\log c_{1}}{\log c_{2}}$ is irrational then the Assouad dimension of the set of differences is maximal. Is it true that when $c_{1} < 1/4$, $c_{2} \geq 1/4$ and  $\frac{\log c_{1}}{\log c_{2}}$ is rational, then the weak separation property for differences is always satisfied?
\end{Q}

\begin{Q}
Suppose $H$ is a Hilbert space.
Let $f \colon H \to H$ be a contracting similarity, i.e. it satisfies  
\[
	\|f(x) - f(y)\| = c \|x-y\|,
\]
for all $x,y \in H$ and for some $c <1$.
We know by Hutchinson \cite{HUTCH} that there exists a unitary operator $U\colon H \to H$ and a point $q \in H$ such that 
\[
	f(x) = c \, U(x) + q.
	\]
In particular, $f$ is bijective and the inverse $f^{-1}$ is a similarity that satisfies
\[
	\|f^{-1}(x) - f^{-1}(y)\| = \frac{1}{c} \|x-y\|,
\]
for all $x,y \in H$. Suppose $\mathcal{F} = \{f_{i} \colon H \to H\}$ is a system of similarities like the one described above with an attractor $K$. What we can we say about the Assouad dimension of $K$ in this case? Can we formulate a separation condition, similar to the one we introduced in this paper  and show that $d_{A}(K-K)$ is finite under that condition?
\end{Q}

\bibliographystyle{plain}
\bibliography{References}
\end{document}